 \definecolor{darkgreen}{rgb}{0,0.5,0}
 \definecolor{darkblue}{rgb}{0,0.08,0.45}
\numberwithin{equation}{section}
\newtheorem{theorem}{Theorem}[section]
\newaliascnt{lemma}{theorem}
\newtheorem{lemma}[lemma]{Lemma}
\newaliascnt{proposition}{theorem}
\newtheorem{proposition}[proposition]{Proposition}
\newaliascnt{corollary}{theorem}
\newtheorem{corollary}[corollary]{Corollary}
\theoremstyle{definition}
\newaliascnt{definition}{theorem}
\newtheorem{definition}[definition]{Definition}
\newaliascnt{example}{theorem}
\newaliascnt{remark}{theorem}
\newtheorem{remark}[remark]{Remark}
\crefname{theorem}{theorem}{theorems}
\Crefname{theorem}{Theorem}{Theorems}
\crefname{proposition}{proposition}{propositions}
\Crefname{proposition}{Proposition}{Propositions}
\crefname{lemma}{lemma}{lemmas}
\Crefname{lemma}{Lemma}{Lemmas}
\crefname{corollary}{corollary}{corollaries}
\Crefname{corollary}{Corollary}{Corollaries}
\crefname{definition}{definition}{definitions}
\Crefname{definition}{Definition}{Definitions}
\crefname{example}{example}{examples}
\Crefname{example}{Example}{Examples}
\crefname{remark}{remark}{remarks}
\Crefname{remark}{Remark}{Remarks}
\definecolor{darkgreen}{rgb}{0,0.5,0}
\definecolor{darkblue}{rgb}{0,0.08,0.45}
\definecolor{someblue}{rgb}{0,0.08,0.9}
\definecolor{rust}{rgb}{0.6,0.1,0.1}
\newcommand{\one}{\bm{1}}
\newcommand{\zero}{\bm{0}}
\newcommand\R{\mathbb{R}}
\renewcommand{\a}{\alpha}
\renewcommand{\phi}{\varphi}
\newcommand{\bx}{\boldsymbol{x}}
\newcommand{\by}{\boldsymbol{y}}
\newcommand{\bz}{\boldsymbol{z}}
\newcommand{\bs}{\boldsymbol{s}}
\newcommand{\bb}{\boldsymbol{b}}
\newcommand{\bu}{\boldsymbol{u}}
\newcommand{\bv}{\boldsymbol{v}}
\newcommand{\bw}{\boldsymbol{w}}
\newcommand{\lr}[1]{\langle #1\rangle}
\newcommand{\lrb}[1]{\left( #1 \right)}
\newcommand{\lrset}[1]{\left\{ #1 \right\}}
\newcommand{\norm}[1]{\left\lVert#1\right\rVert}
\newcommand{\abs}[1]{\left\lvert#1\right\rvert}
\newcommand{\inner}[2]{\left\langle#1,#2\right\rangle}
\newcommand{\normi}[1]{\lVert#1\rVert_\infty}
\DeclareMathOperator{\argmin}{argmin}
\DeclareMathOperator{\range}{range}
\DeclareMathOperator{\diag}{diag}
\renewcommand{\leq}{\leqslant}
\renewcommand{\geq}{\geqslant}
\title{Entropic Mirror Descent for Linear Systems: Polyak's Stepsize and Implicit Bias}
\author{
  Yura Malitsky\thanks{University of Vienna, Austria. \href{mailto:yurii.malitskyi@univie.ac.at}{yurii.malitskyi@univie.ac.at}}
  \and Alexander Posch\thanks{University of Vienna, Austria. \href{mailto:alexander.posch@univie.ac.at}{alexander.posch@univie.ac.at}}}
\date{}
\begin{document}
\maketitle
\begin{abstract}
This paper focuses on applying entropic mirror descent to solve linear systems, where the main challenge for the convergence analysis stems from the unboundedness of the domain. To overcome this without imposing restrictive assumptions, we introduce a variant of Polyak-type stepsizes. Along the way, we give an overview of bounds for $\ell_1$-norm implicit bias, obtain sublinear and linear convergence results, and generalize the convergence result to arbitrary convex $L$-smooth functions. We also propose an alternative method that avoids exponentiation, resembling gradient descent with Hadamard overparametrization but with provable convergence. 
\end{abstract}
\section{Introduction}
One focus of recent research in optimization has been to understand the
\emph{implicit bias} of optimization algorithms~--- the tendency of an algorithm
to converge to solutions with certain properties. In this paper, we continue
this line of research by studying entropic mirror descent applied to the linear
system $A \bx = \bb$. Specifically, given an $m \times n$ matrix $A$ and a
vector $\bb \in \R^m$, we analyze the convergence of entropic mirror descent,
defined by
\begin{equation}
  \label{eq:md}
  \bx_{k+1} = \bx_k \circ \exp(-\alpha_k \nabla f(\bx_k)),
\end{equation}
where $\alpha_k > 0$ is the stepsize, $f(\bx) = \frac{1}{2} \norm{A\bx - \bb}^2$, and $\circ$ denotes elementwise multiplication.

Our motivation for studying this scheme comes from its connection to gradient
descent applied to the nonconvex problem of minimizing $\tilde{f}(\bx) = \frac{1}{2} \norm{A(\bx\circ\bx) - \bb}^2\!$. For both approaches, the
implicit bias is fairly well understood~\cite{chou2023more,soltanolkotabi_implicit_2023}~--- if we start
close to zero and the trajectory converges, it should converge to a vector that
is close to an $\ell_1$-sparse solution of the underdetermined linear system
(assuming one exists).

Surprisingly, the convergence of both algorithms is not straightforward. For
\eqref{eq:md}, standard mirror descent analysis does not apply, even with a
fixed stepsize. Existing results~\cite{wu2020continuous,wu2021implicit} typically guarantee convergence only under restrictive conditions, such as very small or infinitesimal stepsizes, or stepsizes established via backtracking or linesearch~\cite{hurault2023convergent}.  

Our goal is to address this gap by introducing a simple adaptive stepsize rule with explicit convergence rates, all without restrictive assumptions. Along the way, we refine existing bounds on implicit bias. Interestingly, our analysis extends to arbitrary convex $L$-smooth functions with a known minimal value $f^*$. Additionally, we propose an alternative method that avoids exponentiation, resembling the original gradient descent with Hadamard overparametrization.  
In the next section, we provide the necessary background to explain our
motivation and the specifics of our approach.

\section{Background and motivation}

\begin{tcolorbox}[breakable, enhanced,boxrule=0pt,frame hidden,borderline west={1mm}{-2mm}{red}]
  \subsubsection*{\color{red} N.B.}
As the title suggests, we study entropic mirror descent for linear systems. However, it is immediately clear that \eqref{eq:md} cannot solve a general linear system~--- all its iterates are positive vectors. Instead, we primarily focus on \emph{nonnegative} linear systems of the form $A\bx = \bb$, with $\bx \in \R^n_{+}.$ Computationally, this is a harder problem due to the additional nonnegativity constraint. However, from a presentation perspective, it is much simpler, and thus we mostly focus on it. Later, we will show how to reduce a general linear system to the nonnegative setting.
\end{tcolorbox}
\paragraph{Implicit bias.}
Overparametrization in neural networks has led researchers to reconsider what drives an optimization algorithm to select one solution over another. One such direction is the study of \emph{implicit bias}, the tendency of an algorithm to converge to a specific solution, see  \cite{woodworth_kernel_2020,ji2019implicit,gunasekar2018characterizing,soudry2018implicit,even2023sgd} and references therein. Sometimes this bias is independent of user choices (e.g., in linearly separable datasets). Mostly, however, it is indirectly influenced by the user through the choice of an initial point, hyperparameters, etc. A well-known manifestation of this phenomenon is that gradient descent applied to the least squares problem converges to the solution closest in $\ell_2$-norm to the initial point $\bx_0$.

\paragraph{Hadamard overparametrization.} Multiplication of either vectors or matrices is the simplest operation that breaks convexity and which is so
omnipresent in neural networks, matrix factorization, phase retrieval, etc.
To start approaching these difficult problems, one can take a convex problem and
artificially introduce nonconvexity via vector/matrix multiplication,
see~\cite{zhao2019implicit,vaskevicius_implicit_2019,chou2025get,chou2023more,li2023simplex,poon2023smooth,ouyang2025kurdyka}
and study them instead.

In this paper, we consider one of the simplest problems~--- a nonnegative linear system
\begin{equation}
  \label{eq:problem}
  A\bx=\bb, \quad \bx\in \R^n_{+}.
\end{equation}
One can introduce the parametrization $\bx = \bu\circ \bu = \bu^2$, with $\bu\in \R^n$, and instead try
to study an unconstrained problem
\begin{align*}
A(\bu\circ \bu) = \bb 
\end{align*}
or its minimization analog
\begin{equation}
  \label{eq:12per1e}
  \min_{\bu} f(\bu\circ \bu) = \frac 12 \norm{A(\bu\circ   \bu)-\bb}^2.
\end{equation}
Applying gradient descent with the stepsize $\alpha$ to the problem above gives
\begin{equation}\label{eq:102hdn}
  \bu_{k+1} = \bu_k - 2\alpha  \nabla f(\bu_k\circ \bu_k)\circ \bu_k.
\end{equation}
Problem~\eqref{eq:12per1e} is nonconvex, so it is already interesting to
understand the convergence of $(\bu_k)$ to a solution: Is it a minimum? Is this
minimum local or global? On the other hand, we can study the implicit bias of
this scheme. This is also intriguing because, in practice, $(\bu_k)$ often
converges to a sparse solution of \eqref{eq:problem} when $\bu_0$ is initialized
close to $\zero$. Although convergence and implicit bias may appear independent,
the latter cannot exist without the former. Moreover, convergence has only been
established in restricted settings: for small stepsizes $\alpha$ or for the gradient
flow (with infinitesimal $\alpha$), as shown in~\cite{wu2021implicit,chou2023more}.

If we square equation~\eqref{eq:102hdn}, we get
\begin{align*}
  \bu_{k+1}^2 = \bu_k^2 \circ (\one - 2\alpha  \nabla f(\bu_k\circ \bu_k))^2,
\end{align*}
which can be rewritten again in terms of the $\bx$ variable as
\begin{equation}
  \label{eq:f322f}
  \bx_{k+1} = \bx_k\circ  (\one - 2\alpha  \nabla f(\bx_k))^2.
\end{equation}
Now, if $\alpha$ is small enough, then using $\exp(-t) \approx 1-t $ for
small $t$, we can approximate $\bx_{k+1}$ as 
\begin{align*}
 \bx_{k+1} = \bx_k\circ  (\one - 2\alpha  \nabla f(\bx_k))^2 \approx \bx_k \circ \exp (-4\a
  \nabla f(\bx_k)).
\end{align*}
The latter update can be interpreted as mirror descent with the (negative) entropy $h(\bx) = \lr{\bx,\log(\bx)-\one}$ as the kernel function. This connection has been used to establish a similar implicit bias in mirror descent.

\paragraph{Mirror descent.}
Let $D_h(\bx,\by) = h(\bx)-h(\by) - \lr{\nabla h(\by),\bx-\by}$ be the Bregman divergence
defined by $h$. Using it as a regularizer instead of the more common $\frac
12\norm{\cdot}^2$, yields the update of the mirror descent~(MD)
\begin{align}
  \label{eq:md_general}
  \bx_{k+1} = \argmin_{\bx}\lrset{ f(\bx_k) + \lr{\nabla f(\bx_k), \bx-\bx_k} + \frac{1}{\a}D_h(\bx,\bx_k)}.
\end{align}
If we apply \eqref{eq:md_general} to our setting with $f(\bx)=\frac12\norm{A\bx-\bb}^2$ and
$h(\bx)=\lr{\bx,\log(\bx)-\one}$, we obtain the update
\begin{equation}
  \label{eq:mdmd}
\bx_{k+1} = \bx_k \circ \exp(-\alpha  \nabla f(\bx_k)),  
\end{equation}
known also as entropic mirror descent. There are two main lines of establishing convergence of the mirror descent:
\begin{enumerate}[(i)]
\item\, Strong convexity of the function $h$, see~\cite{beck2017first}.
  
\item\, Relative smoothness of $f$ wrt $h$, that is the inequality $D_f(\bx,\by)\leq L
  D_h(\bx,\by)$ must hold for all $\bx,\by$ and some $L>0$, see~\cite{Bauschke2016,lu2018relatively}.
\end{enumerate}
Roughly speaking, for our choice of $f$ and $h$, both conditions are equivalent
to $D_h$ being lower-bounded by a quadratic function. However, this cannot hold,
as $h$ is defined over $\R^n_+$ and not the unit simplex where it is strongly
convex. Furthermore, we can state an even stronger result, with the proof (and
precise definition) provided in \Cref{app:unstable}.
\begin{proposition}[Instability of mirror descent with constant stepsizes independent of $\bb$]\label{prop:diverge}
  For any $A \neq 0 \in \R^{m \times n}$ and a stepsize $\alpha > 0$, there exists a vector $\bb \in \R^m$ such that, for mirror descent~\eqref{eq:mdmd} applied to $f(\bx) = \frac{1}{2} \norm{A\bx - \bb}^2$, any solution is an unstable fixed point.
\end{proposition}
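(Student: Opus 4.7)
My plan is to prove instability by linearization. Every nonnegative solution $\bx^*$ of $A\bx=\bb$ is a fixed point of the map $T(\bx):=\bx\circ\exp(-\a\nabla f(\bx))$, since $\nabla f(\bx^*)=A^\top(A\bx^*-\bb)=0$. Expanding $T$ to first order in $\bz_k:=\bx_k-\bx^*$ (using $\exp(-\a A^\top A \bz_k)=\one-\a A^\top A\bz_k+O(\n{\bz_k}^2)$) yields the Jacobian $J = I-\a\diag(\bx^*)A^\top A$, so $\bz_{k+1}=J\bz_k+O(\n{\bz_k}^2)$. By the standard instability criterion for $C^1$ discrete dynamical systems, any eigenvalue of $J$ strictly outside the closed unit disk produces a direction along which perturbations grow geometrically; it therefore suffices to choose $\bb$ so that, for every nonnegative solution of $A\bx=\bb$, the spectral radius of $J$ exceeds $1$.

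To control the spectrum, I partition coordinates by the support $B:=\{i:x^*_i>0\}$. Since $\diag(\bx^*)$ annihilates rows outside $B$, the Jacobian is block-triangular with spectrum $\{1\}$ (of multiplicity $|B^c|$) together with the spectrum of the active block $J_B:=I_B-\a\diag(\bx^*_B)(A^\top A)_{BB}$. Because $\bx^*_B>0$, conjugation by $\diag(\bx^*_B)^{1/2}$ turns $\diag(\bx^*_B)(A^\top A)_{BB}$ into the symmetric PSD matrix $P_B:=\diag(\bx^*_B)^{1/2}(A^\top A)_{BB}\diag(\bx^*_B)^{1/2}$, so the instability condition becomes $\a\lambda_{\max}(P_B)>2$. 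The elementary PSD estimate $\lambda_{\max}(P_B)\geq \mathrm{tr}(P_B)/\rank(P_B)\geq \mathrm{tr}(P_B)/\rank(A)$, combined with $\mathrm{tr}(P_B)=\sum_j\n{A_j}^2 x^*_j$ (where $x^*_j=0$ for $j\notin B$), reduces the problem to the uniform linear inequality
\[
 \sum_j \n{A_j}^2\, x^*_j \;>\; \frac{2\rank(A)}{\a} \qquad \text{for every nonnegative solution } \bx^*.
\]

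It remains to choose $\bb$ realizing this uniform bound. Let $c_j:=\n{A_j}^2$ and set $m(\bb):=\min\{\sum_j c_j x_j : A\bx=\bb,\ \bx\geq 0\}$. Since $A\neq 0$, there exists $j_0$ with $\n{A_{j_0}}>0$; take $\bb_0:=A_{j_0}$, feasible via $e_{j_0}$. The LP is bounded below and hence attains its minimum at a vertex. Any feasible $\bx\geq 0$ with $\sum_j c_j x_j=0$ must be supported on the zero columns of $A$, forcing $A\bx=0\neq \bb_0$ and contradicting feasibility; thus $m(\bb_0)>0$. Using the homogeneity $m(t\bb_0)=t\,m(\bb_0)$, set $\bb:=t\bb_0$ with $t>2\rank(A)/(\a\,m(\bb_0))$. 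Then the displayed inequality holds uniformly in $\bx^*$, so every nonnegative solution of $A\bx=\bb$ is an unstable fixed point.

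The main obstacle I anticipate is that distinct solutions $\bx^*$ yield distinct Jacobians $J$, so no single eigenvector can witness instability across the whole solution set; the trace-plus-LP detour above sidesteps this by converting the spectral-radius condition into a single linear inequality in $\bx^*$ that is then enforceable by scaling $\bb$. A minor technicality is the non-hyperbolicity of $J$ at boundary solutions (eigenvalue $1$ along $B^c$), but this is harmless: the unstable eigenvector of $J_B$ extended by zeros is a feasible perturbation of $\bx^*$ (small in $B$, zero in $B^c$) and drives the instability.
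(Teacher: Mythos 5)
Your proposal is correct and shares the paper's backbone: linearize to the Jacobian $J = I - \a\,\diag(\bx^*)A^\top A$, reduce instability to the spectral condition $\a\,\lambda_{\max}(\diag(\bx^*)A^\top A) > 2$, and then use the homogeneity $\lambda_{\max}(\diag(t\bx)A^\top A) = t\,\lambda_{\max}(\diag(\bx)A^\top A)$ under the scaling $\bb\mapsto t\bb$ to force this for every nonnegative solution. You differ in how uniformity is certified. The paper takes $\bx^*$ to be a minimizer of $\lambda_{\max}(\diag(\bx)A^\top A)$ over the solution set and scales so that even this minimizer is unstable, tacitly assuming that minimum is attained and strictly positive; you instead bound $\lambda_{\max}$ below by $\operatorname{tr}/\rank$, which linearizes the condition into $\sum_j\n{A_j}^2\,x_j^* > 2\rank(A)/\a$, so the worst case over solutions is an LP whose optimum is attained at a vertex and is strictly positive by your zero-column argument. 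This looser spectral estimate is all that is needed once you scale, and the LP detour patches a small gap in the published argument (positivity and attainment of the infimum). You are also more explicit about boundary fixed points, splitting $J$ along the support $B$ of $\bx^*$ and perturbing only inside $B$, whereas the paper applies the linearization criterion to the full $J$ without comment~--- still valid, since a single eigenvalue outside the unit circle suffices regardless of the unit eigenvalues coming from $B^c$, but your reduction makes this point transparent. Same mechanism, cleaner execution.
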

%It may seem surprising that the stepsize $\alpha$ does not depend on $\bb$. However, this is a rather typical situation: for quadratic problems, it is the matrix $A$ that defines the smoothness constant, which in turn determines the stepsize $\alpha$.

One conclusion we can make from \Cref{prop:diverge} is that either (i) stepsize
$\alpha$ has to be adaptive  or (ii) $\alpha$ must depend on $\bb$ as well, which makes it
difficult to come up with a good rule for it.
 
In a sense, we have a somewhat paradoxical situation: the simplest setting~--- a
quadratic objective and the most popular Bregman kernel, negative entropy~---
and yet there is no obvious way to establish convergence of mirror descent (at
least with an $O(1/k)$ rate).

Entropic mirror descent is also widely used in the theoretical computer science literature, where it is better known as the Hedge algorithm~--- a variation of the celebrated Multiplicative Weights Update. The original analysis of the Hedge algorithm, due to~\cite{freund1997decision}, employed a different approach based on the quadratic approximation of the exponential mapping. This idea will also play a key role in our analysis.

\paragraph{Polyak's stepsize.}
Initially proposed for nonsmooth optimization, the Polyak stepsize
\cite{polyak1969minimization} remains a popular choice even in the smooth setting when the optimal function value $f^*$ is known: it is simple to use and often performs well in practice~\cite{loizou2021stochastic,hazan2022revisiting,wang2023generalized,jiang2024adaptive}. Its original interpretation for the (sub)gradient method, $\bx_{k+1} = \bx_k - \alpha_k \nabla f(\bx_k)$, relies on a linear approximation of $f$ by its local model $\tilde f$:
\begin{align*}
\tilde f(\bx) = f(\bx_k) + \lr{\nabla f(\bx_k), \bx - \bx_k}, 
\end{align*}
and sets $\alpha_k$ such that $\tilde f(\bx_{k+1}) = f^*$. For the
update~\eqref{eq:mdmd}, applying the same rule does not yield an explicit
stepsize. Instead, we use an approximation based on $\exp(-t) \approx 1 - t$:
\begin{align*}
f(\bx_k) + \lr{\nabla f(\bx_k), \bx_k \circ \exp(-\alpha_k \nabla f(\bx_k)) - \bx_k} \approx f(\bx_k) - \alpha_k \lr{\nabla f(\bx_k), \bx_k \circ \nabla f(\bx_k)} \stackrel{?}{=} f^*. 
\end{align*}
This provides a slightly simplified description of our approach in the paper, and justifying this approximation is a central part of the convergence proof.

\paragraph{Roadmap.}
\Cref{sec:nls} focuses on a nonnegative linear system, analyzing the implicit bias of MD and its convergence properties with Polyak's stepsizes. 
\Cref{sec:gen} explores several generalizations, including the extension of these results to general linear systems, an alternative to Hadamard descent with provable guarantees and the extension of entropic MD to arbitrary convex $L$-smooth functions. Finally, additional results, proofs, and numerical experiments are provided in the Appendix.

\paragraph{Notation.} 
We use $\R^n_{+}$ and $\R^n_{++}$ to denote the nonnegative and strictly positive orthants, respectively. Similarly, $S$ and $S_+$ represent the solution sets of a general linear system and a nonnegative linear system. Unless stated otherwise, all vector operations are elementwise. We use $\circ$ to denote Hadamard (elementwise) multiplication. The all-zero and all-one vectors are denoted by $\zero$ and $\one$. For $\bx\in \R^n$, $x_i$ denotes the $i$-th component and $x_{\min}$ denotes the minimum over all components of the vector $\bx$. Whenever the function $h$ is mentioned, it always refers to the entropy function $h(\bx) = \lr{\bx, \log (\bx)-\one}$.

\section{Nonnegative linear system}\label{sec:nls}

\subsection{Implicit bias of entropic mirror descent}
In this subsection, we give an overview of the main results explaining the implicit bias of mirror descent, and why initialization close to the origin  yields approximate $\ell_1$-norm minimization. 

If we applied gradient descent to $\min_{\bx} f(\bx) = \frac{1}{2} \norm{A\bx - \bb}^2$, its implicit bias would imply that the trajectory $(\bx_k)$ satisfies $\bx_k \to P_{S} \bx_0$, where $P_{S}$ denotes the metric projection. This follows immediately because the iterates of gradient descent satisfy $\bx_k - \bx_0 \in \range(A^\top)$, and $\range(A^\top) \perp \ker(A)$. For mirror descent, the situation is analogous, except that the metric projection is replaced by the Bregman projection and $S$ by $S_+$. This statement is well known, see e.g. \cite{gunasekar2018characterizing, wu2021implicit}.
\begin{proposition}\label{prop:implicitbias}
  If the MD iterates, $(\bx_{k})$, converge to $\bx^{*}\in S_+$, then we have 
  $\bx^* = \argmin_{\bx\in S_{+}}D_h(\bx, \bx_{0}).$
  Equivalently, $\bx^{*}\in S_+$ satisfies
  \begin{align}\label{eq:k23d1}
  \lr{\nabla h(\bx^*)-\nabla h(\bx_0), \bz - \bx^*} = 0\quad\forall\,\bz \in S_+.
  \end{align}
\end{proposition}
This characterization using the Bregman projection can be used to establish further results on the implicit bias of mirror descent. 
For example, with initialization chosen near the origin $0$, it is well known that mirror descent has an implicit bias towards $\ell_1$-sparse solutions \cite{wu2021implicit, chou2023more, gunasekar2017implicit}. We will take a closer look at this implicit bias and discuss the two main approaches used for quantifying this behavior.
In the following section we let $\bx$ denote $\argmin_{\bx^\prime\in S_+} D_h(\bx^\prime,\bx_0)$ i.e. the limit of MD and let $\bz$ be an $\ell_1$-sparse (or $\ell_1$-minimal) solution, that is, a solution of the nonnegative basis pursuit problem
\begin{equation}
  \label{eq:bp}
\min \norm{\bx}_1 \quad \text{subject to}\quad A\bx = \bb,~~ \bx\in \R^n_{+}.   
\end{equation}.

\subsubsection{Slow rates}
A common approach \cite{wu2021implicit, chou2023more} derives worst case bounds for a fixed initialization. The resulting bounds  are explicit, requiring little knowledge about the solution space of the linear system. One such result is a special case of \cite[Theorem 2.1]{chou2023more} rewritten here to fit our setting.
\begin{proposition}{\cite[Theorem 4]{chou2023more}}\label{cor:bound1}
Let $\bx_0=e^{-\eta}\one$ with $\norm{\bx_0}_1<\norm{\bz}_1$. Then
\begin{align} \label{eq:chou2023more}
 \norm{\bx}_1-\norm{\bz}_1\leq \norm{\bz}_1\frac{\log n}{\eta+\log\frac{\norm{\bz}_1}{n}}.
 \end{align}
\end{proposition}
While this result yields concrete sparsity bounds, they are quite conservative in practice and do not fully explain the observed sparsity. One way of establishing this bound follows directly from  \Cref{prop:implicitbias}. Rearranging equation \eqref{eq:k23d1} when assuming  $\norm{\bx_0}_1<\norm{\bx}_1$ yields
 \begin{align} \label{eq:rearangedIB}
 \norm{\bx}_1-\norm{\bz}_1= \norm{\bz}_1\frac{\inner{\log \tilde\bx}{ \tilde\bz}-\inner{\log {\tilde\bx}}{ \tilde\bx}}{\eta+\log\norm{\bx}_1+\inner{\log {\tilde\bx}}{ \tilde\bx}},
 \end{align}
 where  $\tilde \bx=\frac{\bx}{\norm{\bx}_1}$ and $\tilde \bz=\frac{\bz}{\norm{\bz}_1}$, see \Cref{app:implicitbias} for details. 
\Cref{cor:bound1} now follows by noting that $\inner{\log {\tilde\bx}}{ \tilde\bx}\geq-\log(n)$ and $\inner{\log \tilde\bx}{ \tilde\bz} \leq0$. While this approximation may seem rather crude, especially since in the numerator $\inner{\log {\tilde\bx}}{ \tilde\bx}=-\log n$ and $\inner{\log \tilde\bx}{ \tilde\bz}=0$ cannot hold simultaneously, the resulting bound cannot be improved by much. By optimizing over the numerator, rather than bounding both inner products separately, we can establish both a general upper bound that is tighter than \eqref{eq:chou2023more} and get a lower bound on the worst case.
% \begin{proposition}\label{pro:imprbound1_v2}
% Let $\bx_0=e^{-\eta}\one$. Then there exists a linear system such that $\norm{\bz}_1\geq\norm{\bx_0}_1$ and 
% \begin{align*}
% \frac{W_0(\frac{n-1}{e})}{\eta+\log\frac{\norm{\bx}_1}{n}+1}
% \overset{\substack{\text{worst case}\\\text{lower bound}}}{\leq}
% \frac{\norm{\bx}_1-\norm{\bz}_1}{\norm{\bz}_1}
% \overset{\substack{\text{general}\\\text{upper bound}}}{\leq}
% \frac{W_0(\frac{n-1}{e})}{\eta+\log\frac{\norm{\bx}_1}{n}},
% \end{align*}
% where $W_0$ denotes the Lambert-$W$ function. Furthermore, the upper bound actually holds any linear system with $\norm{\bz}_1\geq\norm{\bx_0}_1$.
% \end{proposition}
\begin{proposition}\label{pro:imprbound1}
Let $\bx_0=e^{-\eta}\one$ with $\norm{\bx_0}_1<\norm{\bz}_1$. Then
\begin{align*} 
 \norm{\bx}_1-\norm{\bz}_1\leq \norm{\bz}_1\frac{W_0(\frac{n-1}{e})}{\eta+\log\frac{\norm{\bx}_1}{n}},
 \end{align*}where $W_0$ denotes the Lambert-$W$ function. 
This bound is nearly sharp: i.e, there exist a linear system for which
\begin{align*} 
\norm{\bx}_1-\norm{\bz}_1\geq \norm{\bz}_1\frac{W_0(\frac{n-1}{e})}{\eta+\log\frac{\norm{\bx}_1}{n}+1}.
 \end{align*}
\end{proposition}
For a proof, see \Cref{app:implicitbias}. However, despite being near tight, this refinement has the same asymptotic scaling in both $\eta$ and $n$ as the bound \eqref{eq:chou2023more}, because $W_0(n)\approx\log(n)-\log(\log(n))$. This highlights that the obtained slow rate is not an artifact of a crude step in the analysis but instead intrinsic to this kind of bound. The main culprit here is that the instances attaining these worst cases depend not only on the dimension but also on the initialization itself.
\subsubsection{Fast rates}
Much sharper bounds can be established when first fixing a problem instance and then analyzing the decay of the $\ell_1$-gap as the initialization tends to zero. The main observation here is that the limit of MD is the solution of a linear program with an entropic penalty term. Let $\bx^\eta$ be the solution of $\argmin_{\bx\in S_+}D_h(\bx,\bx_0)$ with $\bx_0=e^{-\eta}\one$. Then
\begin{align*}
\bx^\eta=&\argmin_{\bx\in S_+}\inner{\bx}{\log\bx-\one}+\eta\inner{\bx}{\one}+ne^{-\eta}\\
&=\argmin_{\bx\in S_+} \inner{\one}{\bx}+\frac{1}{\eta}\inner{\bx}{\log\bx-\one}. 
\end{align*}

In this setting results from linear programming literature tell us that $\norm{\bx^\eta}_1-\norm{\bz}_1$ can actually converge linearly as $\eta$ tends to infinity. In particular, under the assumptions that $A$ has full rank and that $A\bx=\bb$ admits a strictly positive solution, Cominetti and San Martín \cite{cominetti1994asymptotic} prove that for $\bz\in\argmin_{\bx\in S_+} \inner{\one}{\bx}$ one has $\bx^\eta=\bz+O(e^{-M\eta})$ for some $M>0$ as $\eta\to\infty$.
Naturally, this also proves that $\norm{\bx^\eta}_1-\norm{\bz}_1=O(e^{-M\eta})$ and provides intuition as to why in practice one observes much sparser solutions than what the crude bound in \Cref{cor:bound1} would suggest. 

The main limitation of this approach is that it is hard to get concrete bounds on the rate of the linear convergence. Convergence rates for entropic regularized linear programs are established, when $S_+$ is bounded, see \cite{weed2018explicit}, or if $S_+$ is a subset of the simplex \cite{muller2025fisher}. However, these bounds require strong knowledge of the geometry of the solution space, which is usually unknown.

\subsection{Algorithm}
So far, whenever we have discussed implicit bias, we have included the condition
``if the trajectory converges.'' In this section, we address this issue.

For $\bv \in\R^n$, define $\norm{\bv}^2_{\bx}:=\inner{\bx}{\bv^2}$.  The algorithm we propose is a combination of the Polyak-type stepsize with mirror descent
\begin{equation}
  \label{eq:alg}
\begin{aligned}
\alpha_k &= \min\left\{\frac{f(\bx_k)}{\norm{\nabla f(\bx_k)}^2_{\bx_k}},\frac{1.79}{\norm{\nabla f(\bx_k)}_\infty}\right\}\\
 \bx_{k+1}&=\bx_{k}\circ\exp(-\alpha_k\nabla f(\bx_k)).
\end{aligned}
\end{equation}

\begin{theorem}\label{thm:convergence}
Let $A\in \R^{m\times n}$ and $\bb\in \R^m$ such that $S_{+}:=\{\bz\in \R_{+}^n
: A\bz=\bb\}$ is nonempty. Then Algorithm~\eqref{eq:alg} converges to a solution $\bx^*\in S_+$ and
\begin{align*}
 \min_{i\leq k}f(\bx_i)\leq \frac{4R(R+\norm{\bx^*}_1)\max_{j\leq n}\norm{A_{:j}}_2^2}{k+1},
\end{align*} where $R=D_h(\bx^*,\bx_{0})$.
\end{theorem}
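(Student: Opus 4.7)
The plan is to derive a Bregman one-step inequality via the quadratic surrogate $e^{-t}\leq 1-t+t^2$ (valid for $t\geq -1.79$, where $1.79$ is approximately the positive root of $e^{t}=1+t+t^2$), and then exploit the Polyak-type choice to extract a clean $O(1/k)$ rate. Concretely, apply the Bregman three-point identity with $\bu=\bx^*$, $\bv=\bx_{k+1}$, $\bw=\bx_k$; combined with the update $\nabla h(\bx_{k+1})-\nabla h(\bx_k)=-\a_k\nabla f(\bx_k)$ and the explicit formula
\[
D_h(\bx_{k+1},\bx_k) \;=\; -\a_k\lr{\nabla f(\bx_k),\bx_{k+1}}+\norm{\bx_k}_1-\norm{\bx_{k+1}}_1,
\]
a short calculation collapses the increment to
\[
D_h(\bx^*,\bx_{k+1})-D_h(\bx^*,\bx_k) \;=\; \a_k\lr{\nabla f(\bx_k),\bx^*}+\norm{\bx_{k+1}}_1-\norm{\bx_k}_1.
\]
The cap $\a_k\norm{\nabla f(\bx_k)}_\infty\leq 1.79$ places every coordinate $-\a_k(\nabla f(\bx_k))_i$ in the validity range of the scalar inequality, so applying it pointwise and summing gives $\norm{\bx_{k+1}}_1-\norm{\bx_k}_1\leq -\a_k\lr{\bx_k,\nabla f(\bx_k)}+\a_k^2\norm{\nabla f(\bx_k)}_{\bx_k}^2$. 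Combined with the identity $\lr{\nabla f(\bx_k),\bx^*-\bx_k}=-2f(\bx_k)$ (which follows from $A\bx^*=\bb$ since then $\tfrac12\norm{A(\bx^*-\bx_k)}^2=f(\bx_k)$), the central estimate
\[
D_h(\bx^*,\bx_{k+1})-D_h(\bx^*,\bx_k) \;\leq\; -2\a_k f(\bx_k)+\a_k^2\norm{\nabla f(\bx_k)}_{\bx_k}^2
\]
emerges. The first branch of the stepsize, $\a_k\leq f(\bx_k)/\norm{\nabla f(\bx_k)}_{\bx_k}^2$, forces $\a_k^2\norm{\nabla f(\bx_k)}_{\bx_k}^2\leq\a_k f(\bx_k)$, giving the Bregman-Fej\'er descent $D_h(\bz,\bx_{k+1})\leq D_h(\bz,\bx_k)-\a_k f(\bx_k)$ for every $\bz\in S_+$.

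From this monotonicity, $D_h(\bx^*,\bx_k)\leq R$ for all $k$, and a log-sum/Jensen argument applied to $\sum \bx^*_i\log(\bx_{k,i}/\bx^*_i)$ yields the a priori bound $\norm{\bx_k}_1\leq 2(R+\norm{\bx^*}_1)$ (the implicit inequality $u\leq R/\norm{\bx^*}_1+1+\log u$ with $u=\norm{\bx_k}_1/\norm{\bx^*}_1$ is resolved using that $u\mapsto u-\log u$ is increasing on $[1,\infty)$ and $e^{v}\geq 2v$ for $v\geq 1$). This controls both relevant quantities: with $M=\max_j\norm{A_{:j}}_2$,
\[
\norm{\nabla f(\bx_k)}_{\bx_k}^2\leq 2M^2\norm{\bx_k}_1 f(\bx_k)\leq C f(\bx_k), \qquad \norm{\nabla f(\bx_k)}_\infty\leq M\sqrt{2f(\bx_k)},
\]
where $C=4M^2(R+\norm{\bx^*}_1)$. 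Hence $\a_k f(\bx_k)\geq \min\{f(\bx_k)/C,\,c\sqrt{f(\bx_k)}\}$ with $c=1.79/(M\sqrt 2)$, and summing the descent gives $\sum_k\min\{f(\bx_k)/C,\,c\sqrt{f(\bx_k)}\}\leq R$. The value $1.79$ is calibrated so that the second branch is vacuous: any iteration in which $c\sqrt{f(\bx_k)}\leq f(\bx_k)/C$ satisfies $f(\bx_k)\geq c^2C^2$ and hence contributes at least $c^2C=2\cdot 1.79^{2}(R+\norm{\bx^*}_1)>R$ to the sum, which is impossible. Therefore $\a_k f(\bx_k)\geq f(\bx_k)/C$ at every step, and $\min_{i<k}f(\bx_i)\leq RC/k=4R(R+\norm{\bx^*}_1)M^2/k$, matching the claimed rate.

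For the convergence of the iterates themselves, I will use a Bregman analogue of Opial's lemma. Since $(\bx_k)$ is bounded in $\ell^1$, extract a cluster point $\bar\bx$; the $O(1/k)$ rate just proved forces $f(\bar\bx)=0$, i.e.\ $\bar\bx\in S_+$; applying the Bregman-Fej\'er inequality with $\bz=\bar\bx$ then shows that the nonincreasing sequence $D_h(\bar\bx,\bx_k)$ must tend to its subsequential limit $D_h(\bar\bx,\bar\bx)=0$, so $\bx_k\to\bar\bx$. The subtlety already flagged in the proof of \Cref{prop:implicitbias}~--- that $\bar\bx$ may have zero coordinates, making $\nabla h(\bar\bx)$ singular~--- requires restricting the inner-product computations to $\supp(\bar\bx)$. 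I expect the main obstacles to be (i) verifying the scalar inequality $e^{-t}\leq 1-t+t^2$ with precisely the cut-off $t\geq -1.79$ that makes the vacuousness argument work, and (ii) turning the Bregman bound $D_h(\bx^*,\bx_k)\leq R$ into the clean $\ell^1$ bound $\norm{\bx_k}_1\leq 2(R+\norm{\bx^*}_1)$ with the right constant; both reduce to tuned but routine elementary inequalities rather than any conceptually new step.
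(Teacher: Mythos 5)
Your proposal is correct and matches the claimed constants. Steps (a) and (c) — the one-step Bregman inequality via the three-point identity plus $e^{t}\leq 1+t+t^2$ for $t\leq 1.79$, and the Opial-type argument for iterate convergence — are essentially identical to the paper's. Where you genuinely diverge is step (b), the effective lower bound on the stepsize. The paper bounds \emph{both} branches of $\a_k$ explicitly: it uses $\norm{\nabla f(\bx_k)}_\infty \leq \norm{A^\top A}_{1,\infty}\norm{\bx_k-\bz}_1$ and then invokes a generalized Pinsker inequality (Lemmas~\ref{lem:generalizedPinsker}--\ref{lem:genPinskerbound}) to control $\norm{\bx_k}_1$ and $\norm{\bx_k-\bz}_1$ by $D_h(\bz,\bx_k)$. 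You instead derive $\norm{\bx_k}_1\leq 2(R+\norm{\bx^*}_1)$ directly via a Jensen/log-sum argument (an alternative proof of the same Lemma~\ref{lem:genPinskerbound} consequence), then use the sharper bound $\norm{\nabla f(\bx_k)}_\infty\leq \max_j\norm{A_{:j}}_2\sqrt{2f(\bx_k)}$, and argue by \emph{vacuousness}: any iteration governed by the $\ell_\infty$-cap branch would contribute $\geq 2\cdot 1.79^2(R+\norm{\bx^*}_1)>R$ on its own to the telescoped sum $\sum_i\a_i f(\bx_i)\leq R$, which is impossible, so every step is effectively in the Polyak branch with $\a_k\geq 1/C$. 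This produces the same constant $4R(R+\norm{\bx^*}_1)\max_j\norm{A_{:j}}_2^2$ and is arguably a cleaner route. Two small caveats: the constant $1.79$ is not "calibrated" for this argument — any cap in $[1/\sqrt{2},\,1.79]$ satisfies both $2\cdot\mathrm{cap}^2>1$ and the exponential overestimate; $1.79$ is simply the largest value for which $e^t\leq 1+t+t^2$ holds. And in step (c) the phrasing "the $O(1/k)$ rate forces $f(\bar\bx)=0$" should really be: $\a_k\geq 1/C$ and $\sum_k\a_k f(\bx_k)\leq R$ imply $\sum_k f(\bx_k)<\infty$, hence $f(\bx_k)\to 0$ and any cluster point is a solution. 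Neither affects correctness.
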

For the proof, we require several auxiliary lemmas. Their proofs are short and elementary, and they can be skipped entirely upon the first reading.
 \begin{lemma}[Folklore]
 \label{lem:phi_positve_increasing}
 If $t\leq 1.79$, then $\exp(t)\leq 1+t+t^2$.
\end{lemma}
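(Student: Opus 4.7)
The plan is to analyze $g(t) := 1 + t + t^2 - \exp(t)$ and show $g(t) \geq 0$ for all $t \leq 1.79$. I would split at $t = 0$, since the behavior of $g$ is qualitatively different for negative versus positive arguments.

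For the negative branch $t \leq 0$, I would substitute $s = -t \geq 0$ and work with $\phi(s) := 1 - s + s^2 - \exp(-s)$. A direct calculation gives $\phi(0) = 0$, $\phi'(s) = -1 + 2s + \exp(-s)$ with $\phi'(0) = 0$, and $\phi''(s) = 2 - \exp(-s) \geq 1$ for every $s \geq 0$. Hence $\phi'$ is strictly increasing from $0$, so $\phi' \geq 0$ on $[0,\infty)$, and therefore $\phi \geq 0$ there, which is exactly the claim on $(-\infty, 0]$.

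For the positive branch $t \in [0, 1.79]$, I would exploit the unimodal shape of $g$. Differentiation yields $g'(t) = 1 + 2t - \exp(t)$ and $g''(t) = 2 - \exp(t)$, so $g''$ has a single sign change at $\log 2$: positive on $[0, \log 2)$ and negative on $(\log 2, \infty)$. Thus $g'$ is strictly increasing on $[0, \log 2]$ and strictly decreasing on $[\log 2, \infty)$; combined with $g'(0) = 0$ this shows $g'$ has exactly one further zero $t_1 \in (\log 2, \infty)$, with $g' > 0$ on $(0, t_1)$ and $g' < 0$ on $(t_1, \infty)$. Consequently $g$ is unimodal on $[0, \infty)$: it starts at $g(0) = 0$, rises to a maximum at $t_1$, and then decreases. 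In particular, on any interval $[0, T]$ the minimum of $g$ is attained at an endpoint, so it suffices to verify $g(1.79) \geq 0$.

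The main obstacle is precisely this final numerical check, because $1.79$ is chosen just below the actual root of $g$ on $(t_1, \infty)$ (approximately $1.793$), leaving only a small slack in $g(1.79) = 5.9941 - \exp(1.79)$. One needs a rigorous upper bound on $\exp(1.79)$; for instance, writing $\exp(1.79) = e \cdot \exp(0.79)$ and combining a truncated Taylor expansion of $\exp(0.79)$ at $0$ (with an explicit positive Lagrange remainder bounded by $e \cdot 0.79^N/N!$ for a suitable $N$) with a standard rigorous bound such as $e \leq 2.71829$, gives $\exp(1.79) < 5.9941$ and hence $g(1.79) > 0$, completing the proof.
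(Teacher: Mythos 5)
The paper states this lemma as ``Folklore'' and offers no proof at all; you have supplied a complete one, so the comparison is between your argument and nothing. Your proof is correct. On the negative half-line the substitution $s=-t$ and the observation $\phi''(s)=2-e^{-s}\geq 1$ nails down $\phi\geq 0$ via two integrations from $\phi(0)=\phi'(0)=0$; on $[0,1.79]$ the sign analysis of $g''(t)=2-e^t$ shows $g'$ has a single zero $t_1>\log 2$ (since $g'(0)=0$ and $g'(\log 2)=2\log 2-1>0$), so $g$ is unimodal there and the problem reduces to the endpoint check $g(1.79)\geq 0$. That final inequality does hold with modest slack, $g(1.79)=5.9941-e^{1.79}\approx 4.6\cdot 10^{-3}>0$, and the remainder-bounded Taylor computation you sketch (e.g.\ truncating $\sum_k 1.79^k/k!$ at a dozen terms, or your $e\cdot e^{0.79}$ split with $e\leq 2.71829$) is more than enough to certify $e^{1.79}<5.99$. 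One small caution if you write this out fully: make the monotonicity argument for the unique zero $t_1$ explicit by citing $g'(\log 2)>0$, since unimodality of $g'$ alone does not preclude $g'$ staying nonpositive throughout. Other than that, the proposal is a clean, self-contained calculus proof of a fact the paper merely asserts.
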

\begin{proof}
 Let $\phi(x) = 1 + x + x^2 - e^x$. We want to show that $\phi(x) \ge 0$ for all $x \le 1.79$. Note that $\phi$ is convex on $(-\infty, \log 2)$ and concave on $(\log 2, \infty)$. Since $\phi(0) = \phi'(0) = 0$ convexity implies that $\phi(x) \ge 0$ for all $x \le \log 2$. Finally, given that both $\phi(\log 2) \ge 0$ and $\phi(1.79) \ge 0$, concavity implies that $\phi(x) \ge 0$ for all $x \in [\log 2, 1.79]$.
\end{proof}

The statement below can be seen as a generalization of the fact that entropy is strongly convex over the unit simplex. This inequality is likely known, but we have not been able to find it in the literature.
\begin{lemma}[Generalized Pinsker's inequality]\label{lem:generalizedPinsker}
Let $h(\bx)=\lr{\bx,\log (\bx)-\one}$. Then for any  $\bx\in \R^n_{+}$ and $\by\in \R^n_{++}$ it holds that
  \begin{equation}
    \label{eq:pinsker_gen}
    D_h(\bx,\by) \geq \frac 12 \frac{\norm{\bx-\by}_1^2}{\max\{\norm{\bx}_1, \norm{\by}_1\}}.
  \end{equation}
\end{lemma}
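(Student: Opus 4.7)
The plan is to prove the inequality via a second-order Taylor expansion along the segment between $\by$ and $\bx$, combined with a single application of Cauchy--Schwarz. Set $\bx_t := t\bx + (1-t)\by$ for $t\in[0,1]$ and $F(t) := D_h(\bx_t, \by)$. Since $\by \in \R^n_{++}$, $\bx_t$ is strictly positive for $t\in[0,1)$. A direct calculation gives $F(0) = 0$, $F'(0) = 0$, and, using $\nabla^2 h(\bu) = \diag(1/\bu_i)$,
\begin{equation*}
F''(t) = \sum_{i=1}^n \frac{(\bx_i-\by_i)^2}{t\bx_i + (1-t)\by_i}.
\end{equation*}
Integration by parts yields the integral remainder identity $D_h(\bx,\by) = F(1) = \int_0^1 (1-t)\,F''(t)\,dt$.

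The next step is to lower-bound the integrand by the Engel (Cauchy--Schwarz) inequality $\sum a_i^2/b_i \geq (\sum|a_i|)^2/\sum b_i$, applied pointwise in $t$, which gives $F''(t) \geq \norm{\bx-\by}_1^2/(t\norm{\bx}_1 + (1-t)\norm{\by}_1)$. Writing $s := \norm{\bx}_1$ and $r := \norm{\by}_1$, the remaining scalar integral evaluates in closed form:
\begin{equation*}
\int_0^1 \frac{1-t}{ts + (1-t)r}\,dt = \frac{s\log(s/r) - (s-r)}{(s-r)^2} = \frac{\varphi(\rho)}{r(\rho-1)^2},
\end{equation*}
where $\rho := s/r$ and $\varphi(\rho) := \rho\log\rho - \rho + 1$ (with the natural limit $1/(2r)$ when $s = r$). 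So the multivariate inequality reduces to the one-variable statement $\varphi(\rho) \geq (\rho-1)^2/(2\max(\rho,1))$.

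I would verify this scalar inequality from the identities $\varphi(\rho) - (\rho-1)^2/(2\rho) = \int_1^\rho (\rho-u)^2/(u\rho)\,du \geq 0$ for $\rho\geq 1$ and $\varphi(\rho) - (\rho-1)^2/2 = \int_\rho^1 (u-\rho)(1-u)/u\,du \geq 0$ for $\rho\leq 1$, both of which have manifestly non-negative integrands. The main (minor) obstacle I anticipate is the boundary case where some $\bx_i$ vanish: $F''(t)$ blows up at $t=1$, but per coordinate the integral $\int_0^1 (1-t)(\bx_i-\by_i)^2/(t\bx_i+(1-t)\by_i)\,dt$ still equals $\by_i\,\varphi(\bx_i/\by_i)$ (with the convention $0\log 0 = 0$), so summing recovers $D_h(\bx,\by)$ exactly. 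The Cauchy--Schwarz bound then extends by integrating on $[0, 1-\varepsilon]$ and letting $\varepsilon\downarrow 0$, which yields the claimed lower bound $D_h(\bx,\by) \geq \norm{\bx-\by}_1^2/(2\max(\norm{\bx}_1, \norm{\by}_1))$.
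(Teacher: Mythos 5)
Your proof is correct, and it takes a genuinely different route from the paper's even though both hinge on the same core tool, the Engel form of Cauchy--Schwarz $\sum_i a_i^2/b_i \geq (\sum_i |a_i|)^2/\sum_i b_i$ applied to the Hessian quadratic form. The paper uses the Lagrange (mean value) form of the second-order remainder: $D_h(\bx,\by) = \tfrac{1}{2}\langle\nabla^2 h(\xi)(\bx-\by),\bx-\by\rangle$ for a single point $\xi = t\bx+(1-t)\by$ on the open segment, applies Engel, and then finishes in one line via $\norm{\xi}_1 = t\norm{\bx}_1+(1-t)\norm{\by}_1 \leq \max\{\norm{\bx}_1,\norm{\by}_1\}$. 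You use the integral form of the remainder, apply Engel pointwise in $t$, evaluate the resulting scalar integral in closed form, and reduce to a one-variable inequality $\varphi(\rho)\geq(\rho-1)^2/(2\max(\rho,1))$ that you then verify separately. Your route is longer, but it buys two things: it produces a sharper intermediate lower bound, namely
\begin{equation*}
D_h(\bx,\by) \;\geq\; \norm{\bx-\by}_1^2\cdot\frac{s\log(s/r)-(s-r)}{(s-r)^2},
\qquad s=\norm{\bx}_1,\ r=\norm{\by}_1,
\end{equation*}
which is itself the exact $1$-dimensional Bregman quantity evaluated on the $\ell_1$-norms and strictly stronger than the stated bound; and it handles the boundary case $\bx_i=0$ cleanly through absolute convergence of the integral, whereas the mean value form implicitly relies on the (true but unstated) fact that $\xi$ lies in the \emph{open} segment so that $\nabla^2 h(\xi)$ is finite even when $\bx$ sits on the boundary of $\R^n_+$. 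As a stylistic note, once you have the pointwise Engel bound you could instead mimic the paper and replace the scalar integral analysis by the crude estimate $ts+(1-t)r\leq\max\{s,r\}$ inside the integral, which gives the claimed constant $\tfrac{1}{2}$ immediately and shortens the argument considerably at the cost of the sharper intermediate bound.
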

\begin{proof}
  By the mean-value theorem,
  \begin{align*}
   D_h(\bx,\by) = h(\bx) - h(\by) - \inner{\nabla h(\by)}{\bx-\by} = \frac 12 \inner{\nabla^2
      h(\boldsymbol{\xi})(\bx-\by)}{\bx-\by},
  \end{align*}
  where $\nabla^2 h(\bx) = \diag(1/\bx)$ and $\boldsymbol{\xi}\in [\bx,\by]$. Let $\boldsymbol{\xi}=t\bx+(1-t)\by$ for
  some $t\in (0,1)$. Then we can bound
  $2D_h(\bx,\by)$ as
  \begin{flalign*}
    2D_h(\bx,\by) &= \sum_{i=1}^n \frac{(\bx_i-\by_i)^2}{\boldsymbol{\xi}_i}\geq \frac{\left(\sum_{i=1}^n
              \abs{\bx_i-\by_i}\right)^2}{\sum_{i=1}^n \boldsymbol{\xi}_i} &  \hspace{-2cm} \text{(Cauchy-Schwarz)}\\ &=
                                                           \frac{\norm{\bx-\by}_1^2}{\sum_{i=1}^n(t\bx_i+(1-t)\by_i)}=\frac{\norm{\bx-\by}_1^2}{t\norm{\bx}_1+(1-t)\norm{\by}_1} \geq \frac{\norm{\bx-\by}_1^2}{\max\{\norm{\bx}_1, \norm{\by}_1\}}.\qedhere
  \end{flalign*}
\end{proof}

\begin{lemma}\label{lem:genPinskerbound}
  For any  $x\in \R^n_{+}$ and $y\in \R^n_{++}$ it holds that
  \begin{equation}
    \label{eq:max_bound}
    \max\{\norm{\bx}_1,\norm{\by}_1\} \leq 2D_h(\bx,\by) + 2\min\{\norm{\bx}_1,\norm{\by}_1\}.
  \end{equation}
\end{lemma}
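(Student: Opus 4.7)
The plan is to derive this inequality directly from the generalized Pinsker inequality of Lemma~\ref{lem:generalizedPinsker}, together with the triangle inequality for the $\ell_1$-norm of nonnegative vectors. No separate analysis of the Bregman divergence is required; everything follows by algebra once the right bound is plugged in.

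To set up, I would denote $a = \max\{\n{\bx}_1, \n{\by}_1\}$ and $b = \min\{\n{\bx}_1, \n{\by}_1\}$, so that the claim reduces to showing
\[
  a - 2b \leq 2 D_h(\bx, \by).
\]
If $a \leq 2b$, the inequality is immediate because $D_h(\bx,\by) \geq 0$. So the interesting case is $a > 2b$, where the right-hand side must do real work.

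For that case, since $\bx, \by \geq \zero$, the triangle inequality gives
\[
  \n{\bx - \by}_1 \;=\; \sum_{i=1}^n \abs{\bx_i - \by_i} \;\geq\; \Bigl\lvert \sum_{i=1}^n(\bx_i - \by_i)\Bigr\rvert \;=\; \bigl\lvert \n{\bx}_1 - \n{\by}_1 \bigr\rvert \;=\; a - b.
\]
Plugging this into Lemma~\ref{lem:generalizedPinsker} and using the identity $(a-b)^2/a = a - 2b + b^2/a$, I obtain
\[
  2 D_h(\bx, \by) \;\geq\; \frac{\n{\bx - \by}_1^2}{a} \;\geq\; \frac{(a-b)^2}{a} \;=\; a - 2b + \frac{b^2}{a} \;\geq\; a - 2b,
\]
which is exactly what is needed.

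There is no substantive obstacle here: the only small point worth flagging is recognizing the algebraic rearrangement $(a-b)^2/a = a - 2b + b^2/a$, which exposes the term $a - 2b$ explicitly and makes the nonnegative remainder $b^2/a$ responsible for absorbing the slack. Everything else is a direct appeal to the previous lemma.
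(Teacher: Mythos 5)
Your proof is correct and follows essentially the same route as the paper: lower-bound $\n{\bx-\by}_1$ by $a-b$ via the triangle inequality, plug into the generalized Pinsker inequality, and expand $(a-b)^2/a = a - 2b + b^2/a \geq a-2b$. The preliminary case split on $a \leq 2b$ is harmless but unnecessary, since the final chain of inequalities already covers it.
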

\begin{proof}
  This is a direct consequence of the previous lemma. Let
  $ M= \max\{\norm{\bx}_1,\norm{\by}_1\}$ and $m =
  \min\{\norm{\bx}_1,\norm{\by}_1\}$. Applying the triangle inequality
  $\norm{\bx-\by}_1\geq M - m$ in \eqref{eq:pinsker_gen}, we get
\begin{align*}
  2D_h(\bx,\by)\geq \frac{(M-m)^2}{M} = M - 2m + \frac{m^2}{M}\geq M - 2m,
\end{align*}
and the desired inequality follows.
\end{proof}
\begin{proof}[Proof of \Cref{thm:convergence}]
The proof proceeds in three steps. First, we show that \textbf{(a)} the step size in \eqref{eq:alg} allows us to bound the descent of the Bregman divergence as  
\begin{align*}
 D_h(\bz,\bx_{k+1}) - D_h(\bz,\bx_{k}) \leq -\alpha_k f(\bx_k),
\end{align*}
where $\bz\in S_+$. Telescoping this inequality gives  
\begin{align}\label{eq:32rf3q}
 \sum_{i=0}^k \alpha_i f(\bx_{i})  \leq D_h(\bz,\bx_{0}).
\end{align}  
It then remains to show that \textbf{(b)} the step sizes $(\alpha_k)$ are bounded away from zero to ensure an $O(1/k)$ rate. Finally, this will lead to \textbf{(c)} the convergence of iterates.

\textbf{(a)} By the Three-Points-Lemma \cite[Chapter~9.2, p.~252]{beck2017first} of mirror descent for all iterates
 $(\bx_{k})\subset \R^n_{++}$ it holds that 
\begin{align}\label{eq:r1rfcq}
 D_h(\bz,\bx_{k+1})-D_h(\bz,\bx_{k})= -\alpha_k\langle\nabla f(\bx_k),\bx_{k}-z\rangle+ D_h(\bx_{k},\bx_{k+1}).
\end{align}
For $f(\bx)=\frac{1}{2}\norm{A\bx-\bb}^2$ and  $\bz\in S_+$ we have
\begin{align}\label{eq:23qfq2}
 \alpha_k\inner{\nabla f(\bx_k)}{\bx_{k}-\bz}=\alpha_k\inner{A^\top(A\bx_k-\bb)}{\bx_{k}-\bz} = 2\alpha_k f(\bx_k).
\end{align}
By the entropic mirror descent update rule \eqref{eq:md}, it follows that  $D_h(\bx_{k},\bx_{k+1})$ takes the explicit form
\begin{align}
D_h(\bx_{k},\bx_{k+1})&=\inner{\bx_{k}}{\log \bx_{k}/\bx_{k+1}}+\inner{\bx_{k+1}-\bx_{k}}{\one}\nonumber\\
&=\inner{\bx_{k}}{\log \bx_{k}/\bx_{k+1}-\one+\bx_{k+1}/\bx_{k}}\nonumber\\
&=\inner{\bx_{k}}{\alpha_k\nabla f(\bx_{k})-\one+\exp{\left(-\alpha_k\nabla f(\bx_{k})\right)}}.\label{eq3:s-1+exp}
\end{align}
We want to find an upper bound for the right-hand side of \eqref{eq3:s-1+exp}.
In~\Cref{lem:phi_positve_increasing} we established that $\exp(t)\leq 1+t+t^2$ for $t\leq1.79$. 
Consequently, since by definition $\alpha_k\norm{\nabla f(\bx_k)}_\infty
\leq1.79$ and $\bx_k\in \R^n_{++}$, we have that 
\begin{align*}
D_h(\bx_{k},\bx_{k+1})\leq\inner{ \bx_k}{\alpha_k \nabla f(\bx_k) -\one + (\one-\alpha_k \nabla f(\bx_k)+\alpha_k^2 \nabla f(\bx_k)^2)}= \inner{ \bx_k}{\alpha_k^2 \nabla f(\bx_k)^2}.
\end{align*}
The term $\inner{ \bx_k}{\nabla f(\bx_k)^2}$ can be written as $\norm{\nabla f(\bx_k)}^2_{\bx_k}$, the square of a weighted gradient norm.
Finally, going back to~\eqref{eq:r1rfcq}, we conclude
\begin{align}
 D_h(\bz,\bx_{k+1})-D_h(\bz,\bx_{k})&\leq - 2 \alpha_k f(\bx_k)+\alpha_k^2\norm{\nabla f(\bx_k)}^2_{\bx_k}.\label{eq2:after_exp_1+x+sx2_bound}\\
 &=-\alpha_k f(\bx_k)+\alpha_k(\alpha_k\norm{\nabla f(\bx_k)}^2_{\bx_k}-f(\bx_k))\label{eq2:line2}\\
 &\leq-\alpha_k f(\bx_k),\nonumber
\end{align}
where in \eqref{eq2:line2} we used that $\alpha_k\norm{\nabla f(\bx_k)}^2_{\bx_k}\leq f(\bx_k)$. Note that $\alpha_k$
actually minimizes the right-hand side of expression
\eqref{eq2:after_exp_1+x+sx2_bound}, given the constraint $\alpha_k\leq
\frac{1.79}{\norm{\nabla f(\bx_k)}_\infty}$. Thus, from this perspective
$\alpha_k$ closely resembles the traditional Polyak stepsize for gradient
descent. This proves~(a).

\textbf{(b)} Here we  show that the sequence $(\alpha_k)$ is strictly separated from $0$. Let $\norm{A}_{p,q}:= \sup_{\bx\in \R^n} \frac{\norm{A x}_q}{\norm{\bx}_p}$.  By Hölder's inequality, we have
\begin{align*}
\frac{f(\bx_k)}{\norm{\nabla f(\bx_k)}_{\bx_k}^2} &\geq  \frac{f(\bx_k)}{\norm{\bx_k}_1\norm{\nabla f(\bx_k)}_\infty^2}\geq \frac{1}{2\norm{\bx_k}_1\norm{A^\top}_{2,\infty}^2}
\end{align*}  and
\begin{align*}
 \frac{1.79}{\norm{\nabla f(\bx_k)}_\infty}&\geq\frac{1.79}{\norm{A^\top A}_{1,\infty}\norm{\bx_k-\bz}_1}.
\end{align*} 
Note that $\norm{A^\top}_{2,\infty}$ is given by $\max_{j\leq n}\norm{A_{:j}}_2$, the maximum column $\ell_2$-norm of the matrix $A$. Moreover, the operator norm $\norm{A^\top A}_{1,\infty}$ is given by the largest entry of $A^\top A$ that is $\max_{i,j\leq n}\inner{A_{:i}}{A_{:j}}=\max_{j\leq n}\norm{A_{:j}}_2^2$. 

Combining the above inequalities, we can bound $\alpha_k$ by
\begin{align*}
 \alpha_k\geq \max_{j\leq n}\norm{A_{:j}}_2^{-2}\left(\max\left\{\norm{\bx_k-\bz}_{1},2\norm{\bx_k}_1\right\}\right)^{-1}.
 \end{align*}
By~\Cref{lem:genPinskerbound}, we have that 
\begin{align*}
 \norm{\bx_k}_1\leq  2D_h(\bz,\bx_k) + 2\norm{\bz}_1
\end{align*}
and by the triangle inequality,
\begin{align*}
\norm{\bz-\bx_k}_1\leq \norm{\bz}_1+\norm{\bx_k}_1\leq 2D_h(\bz,\bx_k) + 3\norm{\bz}_1.
\end{align*}
Since the sequence $D_h(\bz,\bx_k)$ is nonincreasing, we deduce that
\begin{align*}
\max\{2\norm{\bx_k}_1, \norm{\bx_k-\bz}_1\}\leq 4(D_h(\bz,\bx_k) + \norm{\bz}_1)\leq 4(D_h(\bz,\bx_0) + \norm{\bz}_1).  
\end{align*}
Thus, 
\begin{align*}
 \alpha_k\geq \frac{1}{4(D_h(\bz,\bx_0) + \norm{\bz}_1)\max_{j\leq n}\norm{A_{:j}}_2^2},
\end{align*} which proves~(b). Now, using this estimate in \eqref{eq:32rf3q}, we obtain
\begin{align*}
 \min_{0\leq i\leq k}f(\bx_i)\leq \frac{4D_h(\bz,\bx_{0})(D_h(\bz,\bx_0) +\norm{\bz}_1)\max_{j\leq n}\norm{A_{:j}}_2^2}{k+1}.
\end{align*}

\textbf{(c)} 
To prove the convergence of the iterates, we first note that the sequence $(\bx_k)$ is bounded (as proved in (b)). Consequently, there exists a convergent subsequence $\bx_{t_k} \to \bx^\infty$. By continuity, we have
\begin{align*}
f(\bx^\infty) = \lim_{k\to\infty} f(\bx_{t_k}) = 0,
\end{align*}
which implies that $\bx^\infty \in S_+$. Thus, the sequence $D_h(\bx^\infty,\bx_k)$ is decreasing and hence convergent. Since $D_h(\bx^\infty,\bx_{t_k}) \to 0$ along the subsequence (due to $\bx_{t_k} \to \bx^\infty$), it follows that $D_h(\bx^\infty,\bx_k)$ converges to $0$. Finally, from~\Cref{lem:generalizedPinsker}, we know that
\begin{align*}
2D_h(\bx^{\infty},\bx_k) \max(\norm{\bx^\infty}_1, \norm{\bx_k}_1) \geq \norm{\bx^\infty - \bx_k}_1^2, 
\end{align*}
which ensures that $\bx_k \to \bx^{\infty}$. The proof is complete.
\end{proof}

\begin{remark}
  The main crux of the proof is a quadratic approximation of the exponential
  function: $\exp(t) \leq 1 + t + t^2$ for $t \leq 1.79$. Notably, the Hedge
  Algorithm~\cite{freund1997decision} also relies on a similar quadratic bound
  and can be viewed as an entropic mirror descent, albeit over the unit
  simplex. Our approach differs in that we analyze $D_h$ rather than the
  $\ell_1$-norm of the update.
\end{remark}

\begin{remark}
 An alternative stepsize strategy that can be used to establish convergence is backtracking. Specifically, by choosing the stepsize $\alpha_k$ such that 
$\alpha_k D_f(\bx_k,\bx_{k+1}) < D_h(\bx_k,\bx_{k+1})$, 
one can derive sublinear convergence rates using an analysis similar to that of \cite{lu2018relatively}. 
However, this approach has several drawbacks.  Not only does each iteration become more expensive, but our simulations show that, on average, the step sizes produced by our version of Polyak's stepsize are larger than those from backtracking. As a consequence, our method is faster in practice, even in terms of iteration count. See Figure~\ref{fig:experiment10}. This is not surprising, since the condition $\alpha_k D_f(\bx_k,\bx_{k+1})<D_h(\bx_k,\bx_{k+1})$ ensures that $f(\bx_k)$ is non-increasing which is a stricter requirement than the one imposed by our method.
\end{remark}

\subsection{Linear convergence}\label{subsec:linconv}
 A natural question is whether the sublinear rate of entropic mirror descent can be improved. In  \Cref{thm:linear_rate}, we establish global and local linear convergence rates in the case when the solution $\bz= \argmin_{\bx\in S_{+}}D_h(\bx, \bx_{0})$ is strictly separated from $0$ in every variable. Note that this rate depends on $ z_{\min}>0$ and thus does not guarantee linear rates when converging to sparse solutions.

\begin{theorem}\label{thm:linear_rate}
Let $(\bx_k)$ be as in \Cref{thm:convergence}. Suppose that $\bx_k$ converges to a solution $\bz$ that is strictly separated from the boundary of the nonnegative orthant, i.e., $ z_{\min}>0$. Then 
\begin{align*}
  D_h(\bz,\bx_{k+1})\leq D_h(\bz,\bx_{k})\left(1-\frac{ -\lambda_{\min}^{+} z_{\min}W_0\left(-\exp\left(-1-\tfrac{D_h(\bz,\bx_k)}{ z_{\min}}\right)\right)}{8\max_{j\leq n}\norm{A_{:j}}_2^2\left(\norm{\bz}_1+D_h(\bz,\bx_k)\right)}\right),
\end{align*} where $\lambda_{\min}^{+}$ is the smallest positive eigenvalue of $A^\top A$. Furthermore, replacing $D_h(\bz,\bx_k)$ with $D_h(\bz,\bx_0)$ in the above fraction yields a global linear rate. This global rate can be very slow, but we also get a local linear rate of 
\begin{align*}
 1-\frac{\lambda_{\min}^{+}  z_{\min}}{8\max_{j\leq n}\norm{A_{:j}}_2^2\norm{\bz}_1}.
\end{align*}
\end{theorem}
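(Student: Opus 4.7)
The plan is to combine the per-step descent inequality from the proof of \Cref{thm:convergence} with an error-bound-type lower bound on $f(\bx_k)$ in terms of $D_h(\bz, \bx_k)$. Part~(a) of that proof gives $D_h(\bz,\bx_{k+1}) - D_h(\bz, \bx_k) \leq -\alpha_k f(\bx_k)$, and re-running the stepsize estimate from part~(b) \emph{without} the final relaxation $D_h(\bz,\bx_k)\leq D_h(\bz,\bx_0)$ sharpens the stepsize lower bound to $\alpha_k \geq 1/(4(D_h(\bz,\bx_k)+\norm{\bz}_1)\max_j\norm{A_{:j}}_2^2)$. Matching the contraction factor stated in \Cref{thm:linear_rate} thus reduces to establishing the error bound
\begin{equation*}
f(\bx_k) \;\geq\; -\tfrac{\lambda_{\min}^{+}\bz_{\min}}{2}\, W_0\bigl(-\exp(-1 - D_h(\bz,\bx_k)/\bz_{\min})\bigr)\, D_h(\bz, \bx_k).
\end{equation*}

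For the error bound I would pass through the projection onto $\range(A^\top)$. Let $\boldsymbol{p}_k := P_{\range(A^\top)}(\bx_k-\bz)$ and $\bz' := \bz + P_{\ker A}(\bx_k-\bz)$, so that $\bx_k-\bz' = \boldsymbol{p}_k$ and $f(\bx_k) = \tfrac{1}{2}\norm{A\boldsymbol{p}_k}^2 \geq \tfrac{\lambda_{\min}^{+}}{2}\norm{\boldsymbol{p}_k}^2$. The implicit-bias identity $\nabla h(\bx_k)-\nabla h(\bz)\in\range(A^\top)$ from \Cref{prop:implicitbias}, combined with the three-point identity for Bregman divergences, yields the Pythagorean-type equality $D_h(\bz',\bx_k) = D_h(\bz,\bx_k)+D_h(\bz',\bz)$; since $\bz_{\min}>0$ and $\bx_k\to\bz$, one also has $\bz'\in S_+$ for all sufficiently large $k$. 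The analytic heart of the proof is the scalar Lambert-$W$ inequality
\begin{equation*}
(x-z)^2 \;\geq\; -z\,W_0\bigl(-\exp(-1-D_h(z,x)/z)\bigr)\, D_h(z,x),\qquad z,x>0,
\end{equation*}
which I would derive by substituting $u = x/z-1$, observing that $D_h(z,x)/z = u-\log(1+u)$, and inverting this via the defining equation $w e^{w} = -\exp(-1-D_h(z,x)/z)$ of the Lambert $W$ function (so that $1+u = -W_0(\cdot)$ on the branch $u\in(-1,0)$); the inequality then reduces to the elementary bound $(1+u)\log(1+u)\geq u$ for $u>-1$, and the remaining case $u\geq 0$ holds with room to spare.

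Applying the scalar bound coordinate-wise to $\bz'$ and $\bx_k$ and then aggregating, via the monotonicity of $z\mapsto -z W_0(-\exp(-1-D/z))$ in $z$ together with the sub-additivity of $t\mapsto -t W_0(-\exp(-1-t))$ on $[0,\infty)$, delivers the desired lower bound $\norm{\boldsymbol{p}_k}^2 \geq -\bz_{\min}W_0(\cdot)\, D_h(\bz,\bx_k)$. The main obstacle I expect is precisely this multivariate aggregation step, because it demands verifying sub-additivity of the Lambert-$W$-based scalar function and carefully reconciling $D_h(\bz',\bx_k)$ with $D_h(\bz,\bx_k)$, which differ by the nonnegative term $D_h(\bz',\bz)$; the hypothesis $\bz_{\min}>0$ is used both to keep $\bz'$ in $S_+$ and to provide a uniform constant in the aggregation. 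Once the error bound is secured, substitution into the descent inequality yields the per-iteration contraction; the global linear rate is then immediate from $D_h(\bz,\bx_k)\leq D_h(\bz,\bx_0)$, and the local rate follows by passing $D_h(\bz,\bx_k)\to 0$ and using $-W_0(-\exp(-1))=1$.
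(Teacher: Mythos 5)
Your overall framework mirrors the paper's: per-step descent $D_h(\bz,\bx_{k+1})-D_h(\bz,\bx_k)\leq-\alpha_k f(\bx_k)$, the sharpened stepsize bound $\alpha_k\geq 1/(4(D_h(\bz,\bx_k)+\norm{\bz}_1)\max_j\norm{A_{:j}}_2^2)$, and then an error bound of the form $f(\bx_k)\geq c\,D_h(\bz,\bx_k)$ with $c=-\tfrac{\lambda_{\min}^{+}}{2}\bz_{\min}W_0(-\exp(-1-D_h(\bz,\bx_k)/\bz_{\min}))$. The local rate via $-W_0(-e^{-1})=1$ and the global rate via $D_h(\bz,\bx_k)\leq D_h(\bz,\bx_0)$ are also the same. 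Where you genuinely depart from the paper is in the error bound: you derive a single composite scalar inequality $(x-z)^2\geq -zW_0(-\exp(-1-D_h(z,x)/z))D_h(z,x)$ (reduced, correctly, to $(1+u)\log(1+u)\geq u$) and try to aggregate it coordinate-wise, whereas the paper first proves the \emph{vector} inequality $D_h(\bs,\bx_k)(\bx_k)_{\min}\leq\norm{\bx_k-\bs}^2$ (their Lemma~\ref{lem:xmin_Dhs_x_geq_norm^2}, the $n$-dimensional Cauchy--Schwarz-free analogue of your scalar bound) and \emph{then} separately lower-bounds $(\bx_k)_{\min}$ via Lemma~\ref{lem:lowerboundonxmin}.

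That factorization, which you skip, is what makes the aggregation painless. Your route runs into the gap you yourself flag, and I do not think it closes as stated. After applying your scalar bound coordinate-wise with $\bz'$ and invoking monotonicity in $z$ and sub-additivity of $\phi(t)=-tW_0(-\exp(-1-t))$ (which does hold, since $\phi(t)/t$ is decreasing), you arrive at something like $(\bz'_{\min})^2\phi\bigl(D_h(\bz',\bx_k)/\bz'_{\min}\bigr)$. To reach the stated bound you need to replace $\bz'_{\min}$ by $\bz_{\min}$ and $D_h(\bz',\bx_k)$ by the \emph{smaller} quantity $D_h(\bz,\bx_k)$. The first replacement is not valid for finite $k$ ($\bz'_{\min}\geq\bz_{\min}$ only holds in the limit), and the second requires $\phi$ to be \emph{nondecreasing}, which is false: $\phi(0)=0$ and $\phi(t)\sim te^{-1-t}\to 0$ as $t\to\infty$, so $\phi$ rises and then falls. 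The paper sidesteps this entirely because in the product $D_h(\bs,\bx_k)(\bx_k)_{\min}$ the Bregman term and the minimum are separate factors, and the monotonicity it needs is only of $t\mapsto -W_0(-\exp(-1-t))$ (without the extra multiplicative $t$), which genuinely is decreasing. If you want to salvage your route, prove the scalar inequality $x D_h(z,x)\leq (x-z)^2$ instead (which, after summing, gives the paper's Lemma~\ref{lem:xmin_Dhs_x_geq_norm^2} with $(\bx_k)_{\min}$ pulled out), and only then apply the Lambert-$W$ inversion to $(\bx_k)_{\min}$.
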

The proof of \Cref{thm:linear_rate} primarily hinges on the Polyak stepsize being bounded below by a positive constant and can be found in \Cref{app:linear_rate}. As a result, the analysis also establishes a local linear convergence rate for any constant stepsize smaller than this bound. Moreover, the local linear rate can be improved to 
\begin{align*}
 1-\frac{\lambda_{\min}^{+}  z_{\min}}{2\max_{j\leq n}\norm{A_{:j}}_2^2\norm{\bz}_1}.
\end{align*}  by employing a tighter bound in place of \Cref{lem:genPinskerbound} and in place of $D_h(\bs,\bx) x_{\min} \leq \norm{\bx - \bs}^2$  when $\bs$ and $\bx$ are sufficiently close.

 \section{Generalization}\label{sec:gen}

\subsection{Linear system}\label{sec:ls}
 We now consider the general linear system $A\bx=\bb$ with $\bx\in\R^n$. Rewriting $\bx=\bu-\bv$ with $\bu,\bv\in\R_+^n$ reduces the problem to a nonnegative setting. Define $f(\bx)=\frac{1}{2}\norm{A\bx-\bb}^2$. Since $\nabla_{\bu} f(\bu-\bv)=\nabla f(\bu-\bv)$ and $\nabla_{\bv} f(\bu-\bv)=-\nabla f(\bu-\bv)$, entropic mirror descent yields the updates
\begin{align*}
\bu_{k+1} &= \bu_k\circ\exp\Bigl(-\alpha_k\,\nabla f(\bu_k-\bv_k)\Bigr), \\
\bv_{k+1} &= \bv_k\circ \exp\Bigl(\alpha_k\,\nabla f(\bu_k-\bv_k)\Bigr).
\end{align*}
This procedure is known as the EG\textpm\ algorithm and is equivalent to mirror
descent with a hyperentropy mirror map~\cite{ghai2020exponentiated}. Next, we
show that it converges with the Polyak stepsize defined as before.
\begin{corollary}\label{cor:EGpm}
Let $A\in\R^{m\times n}$ and $\bb\in\R^m$ be such that $S=\{\bx\in\R^n: A\bx=\bb\}\neq\varnothing$. Then, the EG\textpm\ algorithm with stepsize 
\begin{align*}
\alpha_k=\min\Biggl\{\frac{f(\bx_k)}{\norm{\nabla f(\bx_k)}_{\bu_k+\bv_k}^2},\;\frac{1.79}{\norm{\nabla f(\bx_k)}_\infty}\Biggr\},
\end{align*}
where $\bx_k=\bu_k-\bv_k$, generates iterates that converge to a solution $\bx^*=\bu^*-\bv^*\in S$. Furthermore,
\begin{align*}
 \min_{0\leq i\leq k}f(\bx_i)\leq \frac{4R(R+\norm{\bu^*+\bv^*}_1)\max_{j\leq n}\norm{A_{:j}}_2^2}{k+1},
\end{align*} where $R=D_h(\bu^*,\bu_{0})+D_h(\bv^*,\bv_{0})$.
\end{corollary}
\begin{proof}
Note that the EG\textpm\ algorithm here is equivalent to applying entropic mirror descent to the nonnegative system
\begin{align*}
\tilde{A}\bw &= \bb,\quad \bw=(\bu,\bv)\in\R_+^{2n},\quad \tilde{A}=(A,-A).
\end{align*}
The result then follows directly from \Cref{thm:convergence}.
\end{proof}

\subsection{Hadamard descent+}\label{subsec:hadamard+}
As we have already mentioned, the original motivation to study implicit bias of
MD for linear systems comes from considering Hadamard overparametrization. Unfortunately,
we don't know how to prove a global convergence of such an algorithm with the
proposed stepsize $\alpha_k$ (although numerical experiments suggest that it performs well; see \Cref{app:numerics}). Instead, we can suggest a
simple modification that will provably work:
\begin{equation}\label{eq:bigHadupdate}
\begin{aligned}
  \alpha_k &= \min\left\{\frac{f(\bx_k)}{\norm{\nabla f(\bx_k)}^2_{\bx_k}},\frac{1.79}{\norm{\nabla f(\bx_k)}_\infty}\right\}\\
  \bx_{k+1}&=\bx_k\circ (\one-\alpha_k\nabla f(\bx_k)+\alpha_k^2 \nabla f(\bx_k)^2)
\end{aligned}
\end{equation}
One can see that this scheme closely resembles \eqref{eq:f322f} which can be
rewritten as
\begin{equation*}
  \bx_{k+1} = \bx_k\circ  (\one - 4\alpha_k \nabla f(\bx_k) + 4\alpha_k^2 \nabla f(\bx_k)^2).
\end{equation*}
We still don't have a clear explanation of the intuition behind scheme~\eqref{eq:bigHadupdate}, other than it has all the properties of MD required for the proof in \Cref{app:Hadamard+}.
\begin{theorem}\label{thm:HDplusconvergence}
Let $A \in \R^{m \times n}$ and $\bb \in \R^m$ such that $S_{+}\coloneqq\{\bz \in \R_{+}^n : A\bz = \bb\}$ is nonempty. Then Algorithm~\ref{eq:bigHadupdate} converges to a solution $\bz \in S_+$. Furthermore, the sublinear convergence rate established for mirror descent in \Cref{thm:convergence} also holds for this algorithm.
\end{theorem}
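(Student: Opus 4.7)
The plan is to follow the structure of the proof of \Cref{thm:convergence} verbatim, with the only new ingredient being a replacement analysis for the single descent inequality \eqref{eq2:after_exp_1+x+sx2_bound}, which now requires more care since \eqref{eq:bigHadupdate} is not an MD step. Writing $\bx_{k+1} = \bx_k \circ q_k$ with $q_k = \one - \alpha_k \nabla f(\bx_k) + \alpha_k^2 \nabla f(\bx_k)^2$, the first observation is that $1-t+t^2 = (t-\tfrac12)^2 + \tfrac34 \geq \tfrac34 > 0$ for all $t \in \R$, so the iterates remain in $\R^n_{++}$ and $\log q_k$ is well defined. Unrolling Bregman divergences exactly as in \eqref{eq:r1rfcq}, but now with $\nabla h(\bx_{k+1}) - \nabla h(\bx_k) = \log q_k$ instead of $-\alpha_k \nabla f(\bx_k)$, one gets
\[
D_h(\bz,\bx_{k+1}) - D_h(\bz,\bx_k) = D_h(\bx_k,\bx_{k+1}) + \inner{\log q_k}{\bx_k - \bz}.
\]

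The key observation is a clean algebraic cancellation on the $\bx_k$-side. Using the explicit form $D_h(\bx_k,\bx_{k+1}) = \inner{\bx_k}{-\log q_k + q_k - \one}$, one obtains
\[
D_h(\bx_k,\bx_{k+1}) + \inner{\log q_k}{\bx_k} = \inner{\bx_k}{q_k - \one} = -\alpha_k \inner{\nabla f(\bx_k)}{\bx_k} + \alpha_k^2 \norm{\nabla f(\bx_k)}^2_{\bx_k},
\]
so no approximation of $\log q_k$ is needed on the $\bx_k$-side. For the remaining term $-\inner{\log q_k}{\bz}$, I use \Cref{lem:phi_positve_increasing} in its mirrored form: substituting $s=-t$ into $\exp(s) \leq 1+s+s^2$ (valid for $s\leq 1.79$) gives $\exp(-t) \leq 1-t+t^2$ for $t\geq -1.79$, equivalently $-\log(1-t+t^2) \leq t$. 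Applied coordinatewise with $t_i = \alpha_k\nabla f(\bx_k)[i]$, which satisfies $|t_i|\leq 1.79$ by the stepsize rule, this yields $-\log q_k \leq \alpha_k \nabla f(\bx_k)$. Since $\bz \geq \zero$, we conclude $-\inner{\log q_k}{\bz} \leq \alpha_k\inner{\nabla f(\bx_k)}{\bz}$, and combining with the least-squares identity \eqref{eq:23qfq2} recovers precisely
\[
D_h(\bz,\bx_{k+1}) - D_h(\bz,\bx_k) \leq -2\alpha_k f(\bx_k) + \alpha_k^2 \norm{\nabla f(\bx_k)}^2_{\bx_k},
\]
which is exactly the bound \eqref{eq2:after_exp_1+x+sx2_bound} from the proof of \Cref{thm:convergence}.

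From here the proof transfers verbatim: the stepsize component $\alpha_k \leq f(\bx_k)/\norm{\nabla f(\bx_k)}^2_{\bx_k}$ converts the above into $D_h(\bz,\bx_{k+1}) - D_h(\bz,\bx_k) \leq -\alpha_k f(\bx_k)$; monotonicity of $D_h(\bz,\bx_k)$ together with \Cref{lem:genPinskerbound} gives the identical uniform lower bound $\alpha_k \geq [4(D_h(\bz,\bx_0)+\norm{\bz}_1)\max_j\norm{A_{:j}}_2^2]^{-1}$, hence the $O(1/k)$ rate on $\min_{i\leq k}f(\bx_i)$ by telescoping; and the subsequence argument using \Cref{lem:generalizedPinsker} gives convergence to some $\bx^*\in S_+$. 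The main obstacle one might anticipate---that Hadamard+ is genuinely not a mirror descent step and so the three-point lemma produces a nonlinear perturbation $\log q_k$ in place of the affine $-\alpha_k \nabla f(\bx_k)$---is resolved by the exact cancellation in the algebraic identity above combined with the fact that \Cref{lem:phi_positve_increasing} controls $-\log q_k$ in exactly the direction needed to handle the $\bz$-side.
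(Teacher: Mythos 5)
Your proof is correct and takes essentially the same route as the paper: both apply the bound $-\log(1-t+t^2)\le t$ (the mirrored form of \Cref{lem:phi_positve_increasing}) only on the $\bz$-side and handle the $\bx_k$-side exactly, arriving at the identical inequality $D_h(\bz,\bx_{k+1})-D_h(\bz,\bx_k)\le -2\alpha_k f(\bx_k)+\alpha_k^2\norm{\nabla f(\bx_k)}^2_{\bx_k}$ before transferring the rest of \Cref{thm:convergence} unchanged. The only difference is cosmetic---you reach this via the three-point decomposition $D_h(\bx_k,\bx_{k+1})+\inner{\log q_k}{\bx_k-\bz}$ and an exact cancellation, while the paper expands the Bregman gap directly as $\inner{\bz}{-\log q_k}+\norm{\bx_{k+1}}_1-\norm{\bx_k}_1$; you also add the (correct and worth noting) observation that $1-t+t^2\ge 3/4$ keeps the iterates in $\R^n_{++}$.
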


\subsection{Entropic mirror descent for arbitrary convex functions}
  The schemes \eqref{eq:alg}  and \eqref{eq:bigHadupdate} can be used to solve
  $\min_{\bx \in \R^n_{+}} f(\bx)$ for any convex $L$-smooth function $f$ with a known optimal value $f^* = f(\bx^*)$. Specifically, we get convergence with the stepsize
  \begin{align*}
    \alpha_k&=\min\Biggl\{\frac{f(\bx_k)-f^*}{2\norm{\nabla
    f(\bx_k)}_{\bx_k}^2},\;\frac{1.79}{\norm{\nabla f(\bx_k)}_\infty}\Biggr\}.
\end{align*} The proof follows almost the same lines as in
\Cref{thm:convergence} and \Cref{thm:HDplusconvergence}. The only difference is that, instead of relying on the fact that $f$ is quadratic as in \eqref{eq:23qfq2}, we use convexity. This results in a slightly smaller stepsize. To ensure the steps are sufficiently large, we need to assume $L$-smoothness (or any other condition that guarantees it), see \Cref{sec:Proof_of_lower_boundedness_L_smooth_case}.

\section{Future directions}
There are many directions for further study: accelerated versions of mirror descent or its stochastic variants. From the perspective of implicit bias, one could also consider noisy systems where $f^*$ is unavailable and explore how to adapt the algorithm to such settings. Another natural extension is to generalize linear systems to matrix or tensor systems, known as matrix sensing. In this context, the scheme~\eqref{eq:bigHadupdate} is particularly appealing, as it avoids the need for matrix exponentiation.

\section*{Funding}
This research was funded in whole or in part by the Austrian Science Fund (FWF) [10.55776/STA223].

\bibliographystyle{abbrv}
\bibliography{joint_biblio.bib}
%\newpage
\appendix
\renewcommand*\theHsection{app.\Alph{section}}
\renewcommand*\theHsubsection{app.\Alph{section}.\arabic{subsection}} % if needed

{
\crefalias{section}{appendix} 
\section*{Appendices}  
\section{Proof of implicit bias results}\label{app:implicitbias}
We start by proving the statement that rearranging equation \eqref{eq:k23d1} can yield information about the difference of $\ell_1$-norms.
\begin{lemma}\label{lem:imprbound1}
 Let $\bx_0=e^{-\eta}\one$, and $\bz$ be a solution of \eqref{eq:bp}. If iterates $(\bx_k)$ of Algorithm~\ref{eq:md} converge to a solution $\bx\in S_+$ with $\norm{\bx}_1> \norm{\bx_0}_1$, then it holds that 
  \begin{align} 
 \norm{\bx}_1-\norm{\bz}_1= \norm{\bz}_1\frac{\inner{\log \tilde\bx}{ \tilde\bz}-\inner{\log {\tilde\bx}}{ \tilde\bx}}{\eta+\log\norm{\bx}_1+\inner{\log {\tilde\bx}}{ \tilde\bx}},
 \end{align}
 where  $\tilde \bx=\frac{\bx}{\norm{\bx}_1}$ and $\tilde \bz=\frac{\bz}{\norm{\bz}_1}$.
\end{lemma}
\begin{proof} 
  By \eqref{eq:k23d1}, we have $\inner{\nabla h(\bx)-\nabla h(\bx_0)}{ \bz-\bx}= 0$,
  which we can rewrite as
  \begin{align}\label{eq:bound1_1}
   \inner{\log \bx-\log \bx_0}{ \bz-\bx}=\inner{\log(\bx)+\eta}{ \bz-\bx}
    = 0.
  \end{align}
We start by examining the term $\inner{\log(\bx)+\eta}{ \bx}$. Simple manipulations show that
  \begin{align*}
     \inner{\log(\bx)+\eta}{ \bx} &=\norm{\bx}_1\inner{\log(\bx)+\eta}{ \tilde\bx}= \norm{\bx}_1\left(\inner{\log \tilde\bx}{ \tilde\bx}+\log\norm{\bx}_1+\eta\right).
   \end{align*}
   Now by \eqref{eq:bound1_1} we have 
   \begin{align*}
    \norm{\bx}_1-\norm{\bz}_1&= \frac{\inner{\log \bx+\eta}{ \bz}}{\log\norm{\bx}_1+\eta+\inner{\log \tilde\bx}{ \tilde\bx}}-\norm{\bz}_1\\
    &=\norm{\bz}_1\frac{\inner{\log \bx+\eta}{ \tilde\bz}-\log\norm{\bx}_1-\eta-\inner{\log \tilde\bx}{ \tilde\bx}}{\log\norm{\bx}_1+\eta+\inner{\log \tilde\bx}{ \tilde\bx}}\\
    &=\norm{\bz}_1\frac{\inner{\log \tilde\bx}{ \tilde\bz}-\inner{\log \tilde\bx}{ \tilde\bx}}{\log\norm{\bx}_1+\eta+\inner{\log \tilde\bx}{ \tilde\bx}}.
   \end{align*}
   Note that we used here that $\log\norm{\bx}_1+\eta+\inner{\log \tilde\bx}{ \tilde\bx}\neq 0$ which follows from $\norm{\bx}_1>\norm{\bx_0}_1$.
\end{proof}

We now use \Cref{lem:imprbound1} to prove \Cref{pro:imprbound1}. 

 \begin{proof}
 From \Cref{lem:imprbound1} using that $-\inner{\tilde\bx}{\log\tilde\bx}\leq \log(n)$ and $\inner{\log \tilde\bx}{\tilde\bz}\leq \log{\norm{\tilde\bx}_\infty}$
 \begin{align*}
  \norm{\bz}_1\frac{\inner{\log \tilde\bx}{ \tilde\bz}-\inner{\tilde\bx}{\log\tilde\bx}}{\eta+\log\norm{\bx}_1+\inner{\tilde\bx}{\log\tilde\bx}}\leq \norm{\bz}_1\frac{\log{\norm{\tilde\bx}_\infty}-\inner{\tilde\bx}{\log\tilde\bx}}{\eta+\log\frac{\norm{\bx}_1}{n}}.
 \end{align*} 
 
 Thus it remains to maximize the numerator $\log\|\tilde\bx\|_\infty + -\inner{\tilde\bx}{\log\tilde\bx}$ over all probability vectors $\tilde\bx\in\R^n_+$. Fix $t\in[1/n,1]$ and consider all probability vectors with $\|\tilde\bx\|_\infty=t$. 
Among these, $-\inner{\tilde\bx}{\log\tilde\bx}$ is maximized by $\tilde\bx(t)=\Bigl(t,\frac{1-t}{n-1},\ldots,\frac{1-t}{n-1}\Bigr)$ and equals
$-\inner{\tilde\bx(t)}{\log\tilde\bx(t)}=-t\log t-(1-t)\log\Bigl(\frac{1-t}{n-1}\Bigr)$. 

Therefore the maximization reduces to the one-dimensional problem of
\begin{align}
\max_{t\in[1/n,1]}\;
\Bigl(\log t -\inner{\tilde\bx(t)}{\log\tilde\bx(t)}\Bigr)
&=
\max_{t\in[1/n,1]}\;
\Bigl(\log t -t\log t-(1-t)\log\frac{1-t}{n-1}\Bigr)\nonumber\\
&=
\max_{t\in[1/n,1]}\;
(1-t)\log\Bigl(\frac{t(n-1)}{1-t}\Bigr).\label{eq:proof_cor_num}
\end{align}
Differentiation shows that \eqref{eq:proof_cor_num} is maximized for $t^*=\frac{1}{1+W_0(\frac{n-1}{e})}$. The first bound now follows since $(1-t^*)\log\frac{1-t^*}{t^*(n-1)}=W_0(\frac{n-1}{e})$.

To get the tightness result we observe that the worst case for the numerator can be observed. Fix $\bx^* := \tilde\bx(t^*)$ and set $\bz=\lambda(1,0,\ldots)$, where $\lambda=\frac{\eta+\inner{\bx^*}{\log\bx^*}}{\eta+\log t^*}<1.$ Naturally, we can find a linear system such that $\bx^*$, and $\bz$ are the only two vertices in the solution space. Clearly $\bz$ is $\ell_1$-minimal and by \Cref{prop:implicitbias} we have $\bx^*=\argmin_{\bx\in S_+}{D_h(\bx,\bx_0)}.$ 
Substituting $\bx^*$ and $\bz$ into \Cref{lem:imprbound1} we get 
\begin{align*}
 \norm{\bx}_1-\norm{\bz}_1= \norm{\bz}_1\frac{W_0(\frac{n-1}{e})}{\eta+\log\norm{\bx}_1+\log(t^*)-W_0(\frac{n-1}{e})}.
\end{align*}
It remains to show $\log(t^*)-W\leq-\log n+1$, where $W:=W_0\left(\frac{n-1}{e}\right)$.
Since $\log t^*=-\log(1+W)$, this is equivalent to
\begin{align*}
\log(1+W)+W\geq\log(n)-1
\quad\iff\quad
(1+W)e^W\geq\frac{n}{e}.
\end{align*}
By definition of the Lambert function $We^W=\frac{n-1}{e}$ and we have
$(1+W)e^W
=\frac{1+W}{W}\,We^W
=\left(1+\frac1W\right)\frac{n-1}{e}.$ Hence $(1+W)e^W\geq\frac{n}{e}$ is equivalent to $(1+\frac1W)(n-1)\geq n$, i.e. $W\leq n-1$ which holds for all $n\geq1$, proving the claim.
\end{proof}

\section{Proof of linear convergence rates}\label{app:linear_rate}
We first prove three lemmas before stating the main proposition of this section. The main observation used is that on sets strictly separated from the boundary of the nonnegative orthant the Bregman divergence $D_h(\bx,\by)$ can be upper bounded by $\norm{\bx-\by}^2/y_{\min}$. This follows from the descent lemma and the fact that entropy is $L$-smooth on the set $\{\bx\in\R^n_+ : x_{\min}\geq1/L\}$.
\begin{lemma}  \label{lem:xmin_Dhs_x_geq_norm^2}
  Let $\bx\in\R^n_+$ and $\by \in \R^n_{++}$. Then, $ y_{\min}D_h(\bx,\by)\leq \norm{\bx-\by}^2.$
\end{lemma}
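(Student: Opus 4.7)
The plan is to reduce the inequality to a one-dimensional scalar bound and then apply it coordinatewise. A direct expansion yields the factorization
\begin{equation*}
D_h(\bx,\by) = \sum_{i=1}^n \bigl(\bx_i\log(\bx_i/\by_i) - \bx_i + \by_i\bigr) = \sum_{i=1}^n \by_i\,\phi(\bx_i/\by_i),
\end{equation*}
where $\phi(t) := t\log t - t + 1$, extended continuously by $\phi(0) := 1$. Thus the problem is reduced to the scalar inequality $\phi(t) \leq (t-1)^2$ for every $t \geq 0$, from which a single weighted summation will deliver the claim.

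The scalar step is a routine computation:
\begin{equation*}
\phi(t) - (t-1)^2 = t\log t - t + 1 - t^2 + 2t - 1 = t\bigl(\log t + 1 - t\bigr),
\end{equation*}
which is nonpositive for all $t > 0$ by the elementary logarithm bound $\log t \leq t - 1$, and the boundary case $t = 0$ is checked by direct substitution. Multiplying this per-coordinate inequality by $\by_i$ and summing then gives
\begin{equation*}
D_h(\bx,\by) \leq \sum_{i=1}^n \by_i\Bigl(\frac{\bx_i}{\by_i} - 1\Bigr)^2 = \sum_{i=1}^n \frac{(\bx_i - \by_i)^2}{\by_i} \leq \frac{\norm{\bx - \by}^2}{\by_{\min}},
\end{equation*}
and multiplying through by $\by_{\min}$ is exactly the statement.

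There is no real obstacle; the argument is essentially one line. The descent-lemma route alluded to in the paragraph preceding the lemma would use that $\nabla^2 h(\bz) = \diag(1/\bz) \preceq (1/\by_{\min}) I$ on $\{\bz : \bz_{\min} \geq \by_{\min}\}$ and then invoke the standard quadratic upper bound; this works cleanly when $\bx$ lies in this set but needs a slight approximation step when $\bx$ merely belongs to $\R^n_+$ and may have vanishing coordinates (so that the segment $[\bx,\by]$ exits the region of smoothness). The per-coordinate route above avoids that subtlety entirely, and the only convention to keep in mind is $0\log 0 = 0$, which is already embedded in the definition of $h$.
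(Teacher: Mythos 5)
Your proof is correct and follows essentially the same route as the paper's: both reduce to a per-coordinate scalar inequality and close it with the elementary bound $\log t \leq t-1$; your $\phi(t)\leq(t-1)^2$ is exactly the paper's $uv\log(u/v)-uv+v^2\leq(u-v)^2$ after the substitution $t=u/v$.
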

\begin{proof}
 Recall that $D_h(\bx,\by)=\inner{\bx}{\log\frac{\bx}{\by}}+\inner{\one}{\by-\bx}$. Thus, it is sufficient to show that $uv\log\frac{u}{v}-uv+v^2\leq (u-v)^2$ for any $u,v\in \R_+$. This holds because it is equivalent to
  $uv\log \frac{u}{v}\leq u^2-uv,$
 which is obvious due to  $ \log t\leq t-1$.
\end{proof}
The following lemmas establish bounds that let us estimate $ y_{\min}$ given the terms of $\bx$ and $D_h(\bx,\by)$. In \Cref{lem:invofDh} we derive the inverse of the Bregman divergence in one dimension and use this to construct a bound on $ y_{\min}$ in higher dimensions in \Cref{lem:lowerboundonxmin}. 

\begin{lemma}[Inverse of Bregman divergence in one dimension]\label{lem:invofDh}
 Let $x\in \R_{++}, y\in \R_+$ and $D_h(x,y)<\infty$. Then $y$ can be expressed in terms of $x$ and the Bregman divergence as follows:
\begin{align*}
y=
\begin{cases}
 -xW_0\left(-\exp \left(-1-\frac{D_h(x,y)}{x}\right)\right), &\text{ if } y\leq x\\
 -xW_{-1}\left(-\exp \left(-1-\frac{D_h(x,y)}{x}\right)\right), &\text{ if } y>x,
\end{cases}
\end{align*}
where $W_0$ and $W_{-1}$ denote the two branches of the Lambert $W$ function.
\end{lemma}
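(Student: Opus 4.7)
The plan is to reduce the problem to a single transcendental equation in the ratio $t = y/x$ and then invert it using the defining relation of the Lambert $W$ function, being careful about which branch to pick.

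First I would write out the Bregman divergence of the one-dimensional entropy $h(u)=u\log u$ explicitly:
$$D_h(x,y) = x\log(x/y) + y - x.$$
Setting $t = y/x \in (0,\infty)$ and dividing through by $x$, this becomes
$$\frac{D_h(x,y)}{x} = -\log t + t - 1, \qquad \text{i.e.,}\qquad t - \log t = 1 + \frac{D_h(x,y)}{x}.$$
Exponentiating both sides of $\log t = t - (1 + D_h/x)$ and rearranging yields
$$(-t)\exp(-t) = -\exp\!\left(-1 - \tfrac{D_h(x,y)}{x}\right).$$
This is exactly the defining equation $W(z)\exp(W(z)) = z$ for $W$ evaluated at $z = -\exp(-1 - D_h(x,y)/x)$, with unknown $w = -t$. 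Solving for $t$ and then for $y = xt$ gives
$$y = -x\, W\!\left(-\exp\!\left(-1-\tfrac{D_h(x,y)}{x}\right)\right).$$

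It remains to decide which real branch of $W$ to use. Note that $z = -\exp(-1 - D_h/x) \in [-1/e, 0)$, which is precisely the interval on which both $W_0$ and $W_{-1}$ are real-valued; the two branches are distinguished by $W_0(z) \in [-1,0)$ and $W_{-1}(z) \in (-\infty,-1]$. The function $\phi(t) = t - \log t$ is strictly decreasing on $(0,1]$ and strictly increasing on $[1,\infty)$, with $\phi(1) = 1$, so for each value $c \geq 1$ of the right-hand side there are two preimages: one in $(0,1]$ and one in $[1,\infty)$. The case $y \leq x$ corresponds to $t \in (0,1]$, equivalently $-t \in [-1,0)$, which forces the $W_0$ branch; the case $y > x$ corresponds to $t > 1$, i.e., $-t < -1$, which forces the $W_{-1}$ branch.

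The only mildly delicate point is this branch selection, but once the monotonicity of $\phi$ on each side of $t = 1$ is observed, it is immediate. Everything else is algebraic manipulation, and the two edge cases ($y = x$, giving $D_h = 0$ and $W_0(-1/e) = -1$, consistent with $y = x$; and $y \to 0^+$, giving $D_h \to \infty$ and $W_{-1}(0^-) \to -\infty$) are easily verified as sanity checks.
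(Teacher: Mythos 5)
Your proof is correct and follows essentially the same route as the paper's: write $D_h(x,y)=x\log(x/y)-x+y$, divide by $x$, exponentiate to reach the Lambert $W$ defining equation $(-y/x)\exp(-y/x)=-\exp(-1-D_h(x,y)/x)$, and invert. The only difference is that you spell out the branch selection via the monotonicity of $t\mapsto t-\log t$ on $(0,1]$ and $[1,\infty)$, whereas the paper simply asserts the case split --- a welcome bit of extra rigor, but the same argument.
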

\begin{proof}
We start with the definition of the Bregman divergence
$D_h(x,y)= x\log\frac{x}{y}-x+y$.
Rearranging the terms and dividing by $x$ yields
\begin{align*}
 1+\frac{D_h(x,y)}{x}= \log\frac{x}{y}+\frac{y}{x}.
\end{align*}
Now we apply the function $t\to-\exp(-t)$ to both sides of the equality, resulting in
\begin{align*}
 -\exp \left(-1-\frac{D_h(x,y)}{x}\right)=-\frac{y}{x}\exp\left(-\frac{y}{x}\right).
\end{align*}
Now we can use the Lambert $W$ function, the inverse of the function $t\to t\exp(t)$, to recover $-\tfrac{y}{x}$. Thus,   
\begin{align*}
y=
\begin{cases}
 -xW_0\left(-\exp \left(-1-\frac{D_h(x,y)}{x}\right)\right), &\text{ if } y\leq x\\
 -xW_{-1}\left(-\exp \left(-1-\frac{D_h(x,y)}{x}\right)\right), &\text{ if } y>x.\qedhere
\end{cases}\end{align*}\end{proof}

\begin{lemma}[Lower bound on $ y_{\min}$ given $D_h(\bx,\by)$ and $ x_{\min}$]\label{lem:lowerboundonxmin}
Let $\bx\in \R_{++}^n, \by \in \R_{+}^n$ and $D_h(\bx,\by)<\infty$. Then,
\begin{align*}
 y_{\min} \ge - x_{\min}\,W_0\Bigl(-\exp\Bigl(-1-\frac{D_h(\bx,\by)}{ x_{\min}}\Bigr)\Bigr).
\end{align*}
\end{lemma}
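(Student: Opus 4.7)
The plan is to reduce the $n$-dimensional problem to the one-dimensional inverse formula of Lemma \ref{lem:invofDh} by exploiting the separability of $h$. Since $h(\bx) = \sum_i (x_i \log x_i - x_i)$ is separable, the Bregman divergence splits as $D_h(\bx,\by) = \sum_{i=1}^n D_h(x_i,y_i)$ with each summand nonnegative. I would fix an index $i^\star$ with $y_{i^\star} = \by_{\min}$; the splitting then immediately gives $D_h(x_{i^\star}, y_{i^\star}) \leq D_h(\bx,\by)$, while trivially $x_{i^\star} \geq \bx_{\min}$.

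The core of the argument is a monotonicity claim for the auxiliary function $\phi(t) := -W_0\bigl(-\exp(-1-t)\bigr)$ on $[0,\infty)$: namely, $\phi$ is nonnegative, nonincreasing, with $\phi(0) = 1$ and $\phi(t)\to 0$ as $t\to\infty$. This follows because $t\mapsto -\exp(-1-t)$ maps $[0,\infty)$ monotonically into $[-1/e, 0)$, and $W_0$ is increasing on $[-1/e,0]$ with image $[-1,0]$; negating flips monotonicity and lands the composition in $(0,1]$. With this notation the target inequality is $\by_{\min} \geq \bx_{\min}\,\phi\bigl(D_h(\bx,\by)/\bx_{\min}\bigr)$.

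Now I would split into two cases. If $y_{i^\star} \leq x_{i^\star}$, the first branch of Lemma \ref{lem:invofDh} is applicable and yields $y_{i^\star} = x_{i^\star}\,\phi\bigl(D_h(x_{i^\star},y_{i^\star})/x_{i^\star}\bigr)$. Combining $D_h(x_{i^\star},y_{i^\star}) \leq D_h(\bx,\by)$ with $x_{i^\star} \geq \bx_{\min}$ gives
\[
\frac{D_h(x_{i^\star},y_{i^\star})}{x_{i^\star}} \;\leq\; \frac{D_h(\bx,\by)}{\bx_{\min}},
\]
and monotonicity of $\phi$ then supplies $\phi\bigl(D_h(x_{i^\star},y_{i^\star})/x_{i^\star}\bigr) \geq \phi\bigl(D_h(\bx,\by)/\bx_{\min}\bigr)$. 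Since both $\phi$-values are nonnegative and $x_{i^\star}\geq \bx_{\min}\geq 0$, multiplying these inequalities is legal and delivers $\by_{\min} = y_{i^\star} \geq \bx_{\min}\,\phi\bigl(D_h(\bx,\by)/\bx_{\min}\bigr)$. If instead $y_{i^\star} > x_{i^\star}$, then $\by_{\min} > x_{i^\star} \geq \bx_{\min} \geq \bx_{\min}\,\phi(\cdot)$ because $\phi \leq 1$, and the conclusion is immediate.

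The only subtlety I expect is the monotonicity bookkeeping around the Lambert $W$ function, together with keeping track of which Bregman scaling is larger: the cleanliness of the final multiplication step $x_{i^\star}\phi(t_1) \geq \bx_{\min}\phi(t_2)$ hinges on both factors being nonnegative, which is precisely why the restriction $y_{i^\star} \leq x_{i^\star}$ (i.e.\ the $W_0$ branch of Lemma \ref{lem:invofDh}) is the right one to invoke, and why the complementary case reduces trivially to $\phi \leq 1$.
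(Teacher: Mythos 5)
Your proof is correct and follows essentially the same route as the paper: split $D_h$ by separability, invoke the one-dimensional inversion formula of Lemma~\ref{lem:invofDh}, and exploit monotonicity of $\phi(t)=-W_0(-\exp(-1-t))$. The only difference is organizational: the paper asserts $y_i \geq -x_i\,W_0(\cdots)$ for every coordinate $i$ and then takes a minimum, leaving the $y_i>x_i$ branch implicit, whereas you work directly at the minimizing coordinate $i^\star$ and dispose of the case $y_{i^\star}>x_{i^\star}$ explicitly via $\phi\leq 1$, which is a small but genuine tidying-up of the same argument.
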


\begin{proof}
For $n=1$, the claim coincides with the equality in \Cref{lem:invofDh}. For $n\geq2$, note that for each coordinate $i$ we have
\begin{align*}
y_i \ge -x_i\,W_0\Bigl(-\exp\Bigl(-1-\frac{D_h(x_i,y_i)}{x_i}\Bigr)\Bigr).
\end{align*}
Since $t\to -W_0\Bigl(-\exp\Bigl(-1-t\Bigr)\Bigr)\geq0$ is decreasing, $x_i\ge  x_{\min}$ and $D_h(x_i,y_i)\le D_h(\bx,\by)$ for every $i$, it follows that
\begin{align*}
y_i \ge - x_{\min}\,W_0\Bigl(-\exp\Bigl(-1-\frac{D_h(\bx,\by)}{ x_{\min}}\Bigr)\Bigr).
\end{align*}
Taking the minimum over all coordinates completes the proof.
\end{proof}

\begin{proof}[Proof of \Cref{thm:linear_rate}]
Let $\bz=\argmin_{\bx\in S_{+}}D_h(\bx, \bx_{0})$. From \Cref{thm:convergence} we know that 
\begin{align*}
 D_h(\bz,\bx_{k+1})-D_h(\bz,\bx_{k})&\leq-\alpha_k f(\bx_k)
\end{align*}
and we can lower bound the stepsize by
\begin{align*}
 \alpha_k\geq \left(4(D_h(\bz,\bx_0) + \norm{\bz}_1){\max_{j\leq n}\norm{A_{:j}}_2^2}\right)^{-1}.
\end{align*}
Since the stepsize is strictly separated from $0$ it only remains to show that $f(\bx_k)\geq c D_h(\bz,\bx_k)$ for some $c>0$. 
First, note that
\begin{align*}
 f(\bx_k)=\frac{1}{2}\norm{A\bx_k-\bb}^2\geq\frac{1}{2}\norm{\bx_k-\bs}^2\lambda_{\min}^{+},
\end{align*} where $\bs\in \R^n$ is the solution to $A\bs=\bb$ that satisfies that $\bx_k-\bs\perp\text{Ker}(A)$ and $\lambda_{\min}^{+}$ is the smallest positive eigenvalue of $A^\top A$.
 Furthermore, since by \Cref{lem:xmin_Dhs_x_geq_norm^2} for any $\bx\in\R^n_{++}$ we have $D_h(\bs,\bx) x_{\min}\leq \norm{\bx-\bs}^2$ we get
\begin{align*}
 \norm{\bx_k-\bs}^2&\geq {D_h(\bs,\bx_k)}(\bx_k)_{\min}\geq \inf_{\bs:A\bs=\bb}D_h(\bs,\bx_k)(\bx_k)_{\min}=D_h(\bz,\bx_k)(\bx_k)_{\min}.
\end{align*} Here in the last equality we use \Cref{prop:implicitbias}, noting that the statement holds not only for $\bx_k$ with $k=0$ but for all $k\geq0$. Furthermore, by \Cref{lem:lowerboundonxmin}, 
\begin{align*} 
 (\bx_k)_{\min}\geq - z_{\min}W_0\left(-\exp\left(-1-\tfrac{D_h(\bz,\bx_k)}{ z_{\min}}\right)\right)
\end{align*}
Therefore, we have 
\begin{align*}
 D_h(\bz,\bx_{k+1})\leq D_h(\bz,\bx_{k})\left(1-\frac{ -\lambda_{\min}^{+} z_{\min}W_0\left(-\exp\left(-1-\tfrac{D_h(\bz,\bx_k)}{ z_{\min}}\right)\right)}{8\max_{j\leq n}\norm{A_{:j}}_2^2\left(\norm{\bz}_1+D_h(\bz,\bx_k)\right)}\right). 
\end{align*}
Since $D_h(\bz,\bx_{k})$ is decreasing this yields a global linear rate, although depending on the initialization this rate can be very slow.

Furthermore, when $\bx_k$ converges to $\bz$ we get a local linear rate of 
\begin{align*}
 1-\frac{\lambda_{\min}^{+}  z_{\min}}{8\max_{j\leq n}\norm{A_{:j}}_2^2\norm{\bz}_1}.\quad\quad\qedhere 
\end{align*}\end{proof}
\section{Proof of \texorpdfstring{\Cref{thm:HDplusconvergence}}{Theorem \ref{thm:HDplusconvergence}}, convergence of Hadamard descent+}\label{app:Hadamard+}
\begin{proof}
The structure of this proof is similar to that of \Cref{thm:convergence}.
We show that using the stepsize as in \eqref{eq:bigHadupdate} allows us to bound the descent of the Bregman divergence as
\begin{align}\label{eq:2f3mp}
 D_h(\bz,\bx_{k+1})-D_h(\bz,\bx_{k})\leq -\alpha_k f(\bx_k).
\end{align}
Then sublinear convergence of function values and convergence of iterates follow by the same arguments as in steps (b) and (c) in the proof of \Cref{thm:convergence}.

By the definition of the Bregman divergence and the update \eqref{eq:bigHadupdate} of $\bx_k$,  we have
\begin{align}
 D_h(\bz,\bx_{k+1})-D_h(\bz,\bx_{k})&= \inner{\bz}{\log\lrb{{\bx_k}/{\bx_{k+1}}}}+ \norm{\bx_{k+1}}_1-\norm{\bx_k}_1\nonumber\\
 &=\inner{\bz}{-\log\left(1-\alpha_k\nabla f(\bx_k)+\alpha_k^2 \nabla f(\bx_k)^2\right)}+\norm{\bx_{k+1}}_1-\norm{\bx_k}_1.\label{eq:hadlog}
\end{align}
By \Cref{lem:phi_positve_increasing} we have that $-\log(1-t+t^2)\leq t$ for $\abs{t}\leq 1.79$. Since by the definition, $\alpha_k\norm{\nabla f(\bx_k)}_\infty \leq1.79$, we can apply this bound and get
\begin{align*}
 \eqref{eq:hadlog}&\leq\inner{\bz}{\alpha_k\nabla f(\bx_k)}+\norm{\bx_{k+1}}_1-\norm{\bx_k}_1\\
 &=\inner{\bz}{\alpha_k\nabla f(\bx_k)}+\inner{\bx_k}{-\alpha_k\nabla f(\bx_k)+\alpha_k^2 \nabla f(\bx_k)^2}\\
 &=-\alpha_k\inner{\bx_k-\bz}{\nabla f(\bx_k)}+\alpha_k^2\inner{\bx_k}{\nabla f(\bx_k)^2}.
\end{align*}
For $f(\bx)=\frac{1}{2}\norm{A\bx-\bb}^2$ and  $\bz\in S_+$ this simplifies to 
\begin{align}
 D_h(\bz,\bx_{k+1})-D_h(\bz,\bx_{k})&\leq -2\alpha_kf(\bx_k)+\alpha_k^2\norm{\nabla f(\bx_k)}^2_{\bx_k}\nonumber\\
 &=-\alpha_k f(\bx_k)+\alpha_k(\alpha_k\norm{\nabla f(\bx_k)}^2_{\bx_k}-f(\bx_k))\label{eq3:line2}\\ 
 &\leq-\alpha_k f(\bx_k).\nonumber
\end{align}
Here in \eqref{eq3:line2} we used that
$\alpha_k\norm{\nabla f(\bx_k)}^2_{\bx_k}\leq f(\bx_k)$. This proves
inequality~\eqref{eq:2f3mp}.
\end{proof}

\section{Proof of lower-boundedness of the stepsize in the convex (locally) \texorpdfstring{$L$}{L}-smooth case}\label{sec:Proof_of_lower_boundedness_L_smooth_case}
The proof follows similarly to the quadratic case, with the main necessary observation being that $L$-smoothness implies that
\begin{align*} f(\bx_k)-f^* \geq \frac{1}{2L}\norm{\nabla f(\bx_k)}_2^2. \end{align*}
\begin{proof}
 We want to show that
\begin{align*} \alpha_k=\min{\frac{f(\bx_k)-f^*}{2\norm{\nabla f(\bx_k)}_{\bx_k}^2},\frac{1.79}{\normi{\nabla f(\bx_k)}}} \end{align*}
is bounded away from zero. We will first consider the first term in the minimum.
By Hölder's inequality, we have
\begin{align*} \frac{f(\bx_k)-f^*}{2\norm{\nabla f(\bx_k)}_{\bx_k}^2} &\geq \frac{f(\bx_k)-f^*}{2\normi{\bx_k}\norm{\nabla f(\bx_k)^2}_1}= \frac{1}{2\normi{\bx_k}}\frac{f(\bx_k)-f^*}{\norm{\nabla f(\bx_k)}_2^2}\geq \frac{1}{4L\normi{\bx_k}}, \end{align*}
where the last inequality follows from the fact that $L$-smoothness implies that \begin{align*} f(\bx_k)-f^*\geq \frac{1}{2L}\norm{\nabla f(\bx_k)}_2^2. \end{align*}
For the second term note that by $L$-smoothness \begin{align*} \frac{1.79}{\norm{\nabla f(\bx_k)}_\infty}&\geq\frac{1}{\norm{\nabla f(\bx_k)}_2}\geq\frac{1}{L\norm{\bx_k-\bz}_2}. \end{align*} Combining the above inequalities, we can bound $ \alpha_k $ by \begin{align*} \alpha_k\geq \frac{1}{L}(\max{\norm{\bx_k-\bz}_2,4\normi{\bx_k}})^{-1}\geq \frac{1}{L}(\max{\norm{\bx_k-\bz}_1,4\norm{\bx_k}_1})^{-1}. \end{align*}
By \Cref{lem:genPinskerbound}, we have that \begin{align*} \norm{\bx_k}_1\leq 2D_h(\bz,\bx_k) + 2\norm{\bz}_1 \end{align*} and by the triangle inequality, \begin{align*} \norm{\bz-\bx_k}_1\leq \norm{\bz}_1+\norm{\bx_k}_1\leq 2D_h(\bz,\bx_k) + 3\norm{\bz}_1. \end{align*}
Since the sequence $D_h(\bz,\bx_k)$ is nonincreasing, we deduce that
\begin{align*} \max{4\norm{\bx_k}_1, \norm{\bx_k-\bz}_1}\leq 8(D_h(\bz,\bx_k) + \norm{\bz}_1)\leq 8(D_h(\bz,\bx_0) + \norm{\bz}_1). \end{align*}

Thus, \begin{align*} \alpha_k\geq \frac{1}{8L(D_h(\bz,\bx_0) + \norm{\bz}_1)}, \end{align*} which proves lower-boundedness. Now, using this estimate in \eqref{eq:32rf3q} (with an additional factor of $\frac12$ because we are in the convex case), we obtain
\begin{align*} \min_{0\leq i\leq k}f(\bx_i)-f^*\leq \frac{16LD_h(\bz,\bx_{0})(D_h(\bz,\bx_0) +\norm{\bz}_1)}{k+1}. \end{align*}
Note that since our iterates are bounded, local smoothness would also have been sufficient to establish sublinear convergence.
\end{proof}

\section{Proof of \texorpdfstring{\Cref{prop:diverge}}{Proposition \ref{prop:diverge}}, instability of mirror descent with constant stepsizes independent of \texorpdfstring{$\boldsymbol{b}$}{b}}\label{app:unstable}
  \begin{definition}
    Consider a dynamical system $\bx_{t+1} = F(\bx_t)$ for $F\colon \R^n \to
    \R^n$ and let $\bx^*$ be its fixed point, $F(\bx^*)=\bx^*$.  We call $\bx^*$ \emph{unstable}, if there exists $ \varepsilon > 0 $ such that for any $\delta > 0 $, there exists $ \bx_0 $ with $ 0 < \norm{\bx_0 - \bx^*} < \delta$ for which 
\begin{align*}
\limsup_{t \to \infty} \norm{\bx_t - \bx^*} \geq \varepsilon. 
\end{align*}
\end{definition}
\begin{proof}
Consider the mirror descent update function
\begin{align*}
F_{\bb}(\bx) = \bx\circ \exp(-\alpha \nabla f_{\bb}(\bx)),\quad \text{with} \quad f_{\bb}(\bx)=\frac{1}{2}\norm{A\bx-\bb}^2.
\end{align*} The fixed points of this dynamical system are the points where the gradient vanishes (i.e.~ the solutions of the linear system).
Calculating the Jacobian at a solution $\bx$ yields
\begin{align*}
JF_{\bb}(\bx) = I - \alpha\,\diag(\bx)A^\top A.
\end{align*}
Thus, if 
\begin{align*}
\alpha > \frac{2}{\lambda_{\max}(\diag(\bx) A^\top A)},
\end{align*}
where $\lambda_{\max}(\diag(\bx) A^\top A)$ is the largest eigenvalue of $\diag(\bx )A^\top A$, then $\bx $ is an unstable fixed point.

Now, let $\bx^*\in\R^n_+$ be a solution of $A\bx=\bb$ that minimizes $\lambda_{\max}(\diag(\bx) A^\top A)$. Notice that if we scale $\bx^*$ by any $t>0$, defining
\begin{align*}
\tilde{\bx}^* = t\, \bx^* \quad \text{and} \quad \tilde{\bb} = A\tilde{\bx}^* = t\,\bb,
\end{align*}
then 
\begin{align*}
\lambda_{\max}(\diag(\tilde{\bx}^*)A^\top A) = t\,\lambda_{\max}(\diag(\bx^*)A^\top A)
\end{align*}
and $\tilde{\bx}^*$ minimizes the top eigenvalue of $\diag(\bx )A^\top A$ among all solutions of  $A\bx=\tilde{\bb}$.

For a given $\alpha$, set  
\begin{align*}
t = \frac{3}{\alpha\,\lambda_{\max}(\diag(\bx^*) A^\top A)}.
\end{align*}
Then 
\begin{align*}
  \frac{2}{\lambda_{\max}(\diag(\tilde{\bx}^*) A^\top A)}=\frac{2}{t\lambda_{\max}(\diag(\bx^*) A^\top A)}= \frac{2\alpha}{3}\leq \alpha 
  ,
\end{align*}
and since $\tilde\bx$ minimizes $\lambda_{\max}(\diag(\tilde{\bx}^*) A^\top A)$ we get that all solutions are unstable fixed points of the dynamical system
$\bx_{k+1}=F_{\tilde\bb}(\bx_{k})$.
\end{proof}

\section{Numerical experiments}\label{app:numerics}
In this section, we present numerical experiments to support our theoretical conclusions. In particular we will compare entropic mirror descent and Hadamard descent with the Polyak stepsize to mirror descent with the optimal constant stepsize and with backtracking. Additionally, we investigate how initialization near the origin influences the convergence speed of entropic mirror descent with the Polyak stepsize. All code used to generate the numerical experiments is publicly available on \href{https://github.com/AlexanderPosch/Numerics-for-EMD-for-Lin.-Syst.-Polyak-Stepsize-and-Implicit-Bias}{GitHub}.}.
\subsection{Experiment 1: convergence speed comparison}

In this experiment we compare the convergence speed of five algorithms:
\begin{itemize}
    \item[](MD-const.) Mirror descent with the optimal constant stepsize.
    \item[](MD+backtracking) Mirror descent with backtracking and local relative smoothness.
    \item[](MD-Polyak) Mirror descent with Polyak’s stepsize \eqref{eq:alg}.
    \item[](HD-Polyak) Hadamard descent with Polyak’s stepsize \eqref{eq:f322f}.
    \item[](HD+Polyak) Hadamard descent + with Polyak’s stepsize
    \eqref{eq:bigHadupdate}.
\end{itemize}

We generate a $300\times 500$ matrix $A$ whose nonzero singular values are i.i.d. standard half-normal distributed. A target vector $\bz\in [0,1]^{500}$, with $\norm{\bz}_0=30$, is sampled uniformly, and we set $\bb = A\bz$. Each algorithm is initialized with $\bx_0 = 10^{-4}\mathbf{1}$ and run for 25,000 iterations. To estimate the limit, we run an additional 25,000 iterations and record the final vectors.

The left panel of Figure~\ref{fig:experiment10} plots the cumulative minimum function value, i.e., 
\begin{align*}
f(\bar{\bx}_k) - f^*, \quad \text{with } \bar{\bx}_k = \argmin_{s\le k} f(\bx_s), 
\end{align*}
while the right panel shows the Bregman divergence to the estimated limit. The results indicate that mirror descent with Polyak’s stepsize leads to faster convergence than both the optimal constant stepsize and backtracking. Additionally, the Hadamard descent variants with Polyak's stepsize seem to perform similarly to mirror descent.  Note that the sequences of function values are not monotone for the algorithms with the Polyak stepsize.

\begin{figure}[ht!]
    \centering
    \includegraphics[width=1\textwidth,alt={Two side-by-side line plots comparing convergence speed of multiple first-order methods for nonnegative least squares. The left panel shows the cumulative minimum function value, and the right panel shows the Bregman divergence to the estimated limit. In both plots, mirror descent with Polyak’s stepsize converges faster than mirror descent with the optimal constant stepsize and with backtracking. The Hadamard descent variants with Polyak's stepsize perform similarly to mirror descent with Polyak's stepsize.}]{./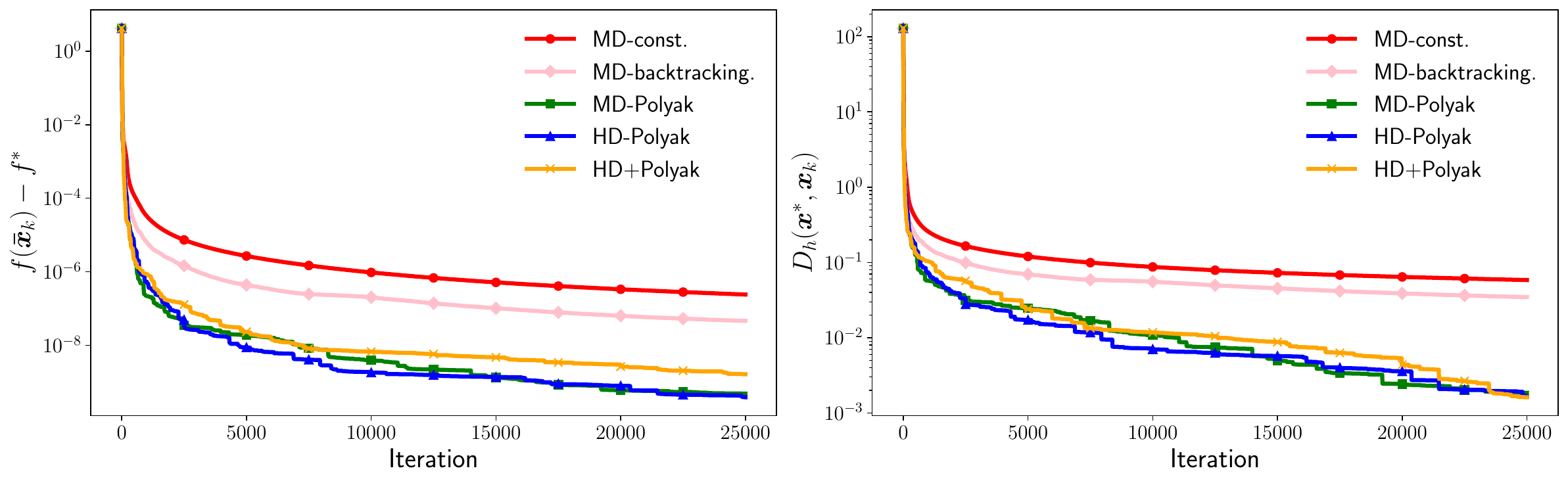}
    \caption{\footnotesize Left: Cumulative minimum function value $f(\bar{\bx}_k)-f^*$ for MD-const., MD-Polyak, HD-Polyak, HD+Polyak and MD-backtracking. Right: Bregman divergence to the estimated limit.\label{fig:experiment10}}
\end{figure}

\subsection{Experiment 2: effect of initialization size on convergence speed}

We study how initialization close to $0$ affects convergence speed of mirror descent with Polyak's stepsize for solving both sparse and nonsparse linear systems.

We generate a $300\times 500$ matrix $A$ with standard normally distributed eigenvalues and sample two target vectors $\bz_1,\bz_2 \in [0,1]^{500}$, where $\bz_1$ is modified to satisfy $\norm{\bz_1}_0=30$, while $\bz_2$ is dense. We set $\bb_1=A\bz_1$ and $\bb_2=A\bz_2$. 

For each starting vector 
\begin{align*}
\bx_0 = 10^{-2}\mathbf{1},\, 10^{-4}\mathbf{1},\, 10^{-8}\mathbf{1},\, 10^{-16}\mathbf{1},\, 10^{-32}\mathbf{1}, 
\end{align*}
we run mirror descent with Polyak’s stepsize for 25,000 iterations. Figure~\ref{fig:experiment2} shows the cumulative minimum function value (log-log scale). In the sparse case (left), smaller initializations slow early convergencebut, somewhat surprisingly, eventually outperform larger initializations. In the dense case (right), initialization close to $0$ consistently leads to slower convergence. 

\begin{figure}[htb!] 
    \centering
    \includegraphics[width=1\textwidth,alt={Two log-log line plots showing cumulative minimum function values versus iteration for different magnitudes of initialization. In the sparse case (left), very small initializations slow early convergence but eventually outperform larger initializations. In the dense case (right), initializations close to zero consistently lead to slower convergence.}]{./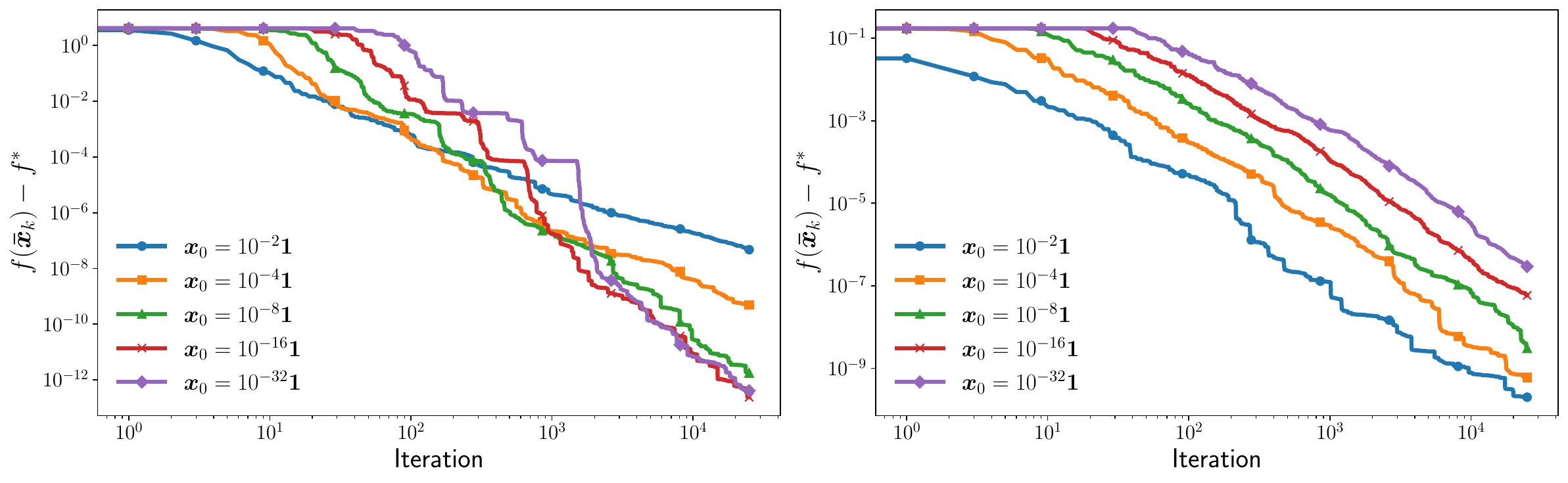}
    \caption{\footnotesize  Cumulative minimum function value $f(\bar{\bx}_k)-f^*$ for mirror descent with Polyak’s stepsize under various initialization. Left: Sparse solution ($\norm{\bz_1}_0=30$). Right: Dense solution.\label{fig:experiment2}}
\end{figure}
\end{document}